% SIAM Article Template
\documentclass{siamart}

% Information that is shared between the article and the supplement
% (title and author information, macros, packages, etc.) goes into
% ex_shared.tex. If there is no supplement, this file can be included
% directly.

% SIAM Shared Information Template
% This is information that is shared between the main document and any
% supplement. If no supplement is required, then this information can
% be included directly in the main document.

% Packages and macros go here

\usepackage{lipsum}
\usepackage{amsfonts}
\usepackage{graphicx}
\usepackage{epstopdf}
\usepackage{algorithmic}
\ifpdf
  \DeclareGraphicsExtensions{.eps,.pdf,.png,.jpg}
\else
  \DeclareGraphicsExtensions{.eps}
\fi

% Declare title and authors, without \thanks
\newcommand{\TheTitle}{Parametric interpolation with the equal area principle} 
\newcommand{\TheAuthors}{G. McGregor, J.-C. Nave}

% Sets running headers as well as PDF title and authors
\headers{\TheTitle}{\TheAuthors}

% Title. If the supplement option is on, then "Supplementary Material"
% is automatically inserted before the title.
\title{{A parametric interpolation framework for 1D scalar conservation laws using the equal area principle}\thanks{The research of GMc was supported in part by a Schulich Scholarship at McGill University. The research of JCN was supported in part by the NSERC Canada Discovery Grants Program. Additionally, JCN would like to thank the Shanghai Jiaotong University Institute of Natural Sciences for hosting him while completing this work.}}

% Authors: full names plus addresses.
\author{
  Geoffrey McGregor\thanks{Department of Mathematics, McGill University, Montreal, QC, Canada
    (\email{Geoffrey.McGregor@mail.mcgill.ca}}
  \and
  Jean-Christophe Nave\thanks{Department of Mathematics, McGill University, Montreal, QC, Canada
    (\email{jcnave@math.mcgill.ca}}
}

\usepackage{amsopn}

\newtheorem{alg}{Algorithm}
\newtheorem{cor}[alg]{Corollary}

\newtheorem{lem}{Lemma}

\newtheorem{rem}{Remark}
\newtheorem{thm}{Theorem}
\newtheorem{dfn}{Definition}

%%% Local Variables: 
%%% mode:latex
%%% TeX-master: "ex_article"
%%% End: 

% Optional PDF information
\ifpdf
\hypersetup{
  pdftitle={\TheTitle},
  pdfauthor={\TheAuthors}
}
\fi

% The next statement enables references to information in the
% supplement. See the xr-hyperref package for details.

% \externaldocument{ex_supplement}

% FundRef data to be entered by SIAM
%<funding-group>
%<award-group>
%<funding-source>
%<named-content content-type="funder-name"> 
%</named-content> 
%<named-content content-type="funder-identifier"> 
%</named-content>
%</funding-source>
%<award-id> </award-id>
%</award-group>
%</funding-group>

\begin{document}

\maketitle

% REQUIRED
\begin{abstract}
In this paper we develop a novel framework for numerically solving scalar conservation laws in one space dimension. Utilizing the method of characteristics in conjunction with the equal area principle we develop an approach where the weak solution is obtained purely as the solution of a parametric interpolation problem.  As this framework hinges on the validity of the equal area principle, we provide a rigorous discussion of the equal area principle and show that, indeed, the equal area principle is equivalent to the Rankine-Hugoniot condition, within the specific context studied in this paper. Combining these results with properties of the characteristic equations yields the desired setting to define the equivalent parametric interpolation problem. We conclude by applying this framework to Burgers' equation and show how one obtains machine precision in the shock position when the initial condition can be represented exactly in the chosen space of parametric polynomials.
\end{abstract}

% REQUIRED
\begin{keywords}
Conservation laws, equal area principle, parametric curves, interpolation
\end{keywords}

% REQUIRED
\begin{AMS}
  35F25, 65M25
\end{AMS}

\section{Introduction}
In this paper we study the Cauchy problem
\begin{equation}
\begin{cases}
u_t+\left(F(u)\right)_{x}=0, \quad \text{for $x,t \in \mathbb{R}$, with $t\geq0$ and $F\in C^2(\mathbb{R})$ uniformly convex}\\ \label{PDE}
u(x,0)=g(x),
\end{cases}
\end{equation}
with a focus on its characteristic equations
\begin{align}
\dot{x}&=F'(u)\nonumber\\ \label{Char1}
\dot{u}&=0.
\end{align}
In particular, we investigate an alternative set of equations which we refer to as the characteristic flow,
\begin{align}
x(x_0,t)&=x_0+F'(g(x_0))t\label{CharFlow1}\\
u(x,t)&=g(x_0).\nonumber
\end{align}
In the pursuit of weak numerical solutions of (\ref{PDE}), there are numerous advantages to studying (\ref{CharFlow1}) instead of (\ref{Char1}). First, we note that the solutions of (\ref{CharFlow1}) exist for all time, whereas the solutions of (\ref{Char1}) only exist until shocks form, which then requires the explicit use of the Rankine-Hugoniot condition, see \cite{Hugo} and \cite{Rankine}. Alternatively, weak solutions of (\ref{PDE}) can be obtained from (\ref{CharFlow1}) by applying the equal area principle in place of the Rankine-Hugoniot condition, see \cite{Whit} and \cite{Equal}. Specifically, equation (\ref{CharFlow1}) is solved beyond when shocks form, creating a multivalued curve.  The desired solution is then recovered by a particular area-preserving projection, namely, the equal area principle. This paper is devoted to rigorously defining such a projection, and through its use, providing an appropriate framework for high-order numerical computation.

\begin{figure}[!ht]
\begin{center}
\includegraphics[width=40mm,height=30mm]{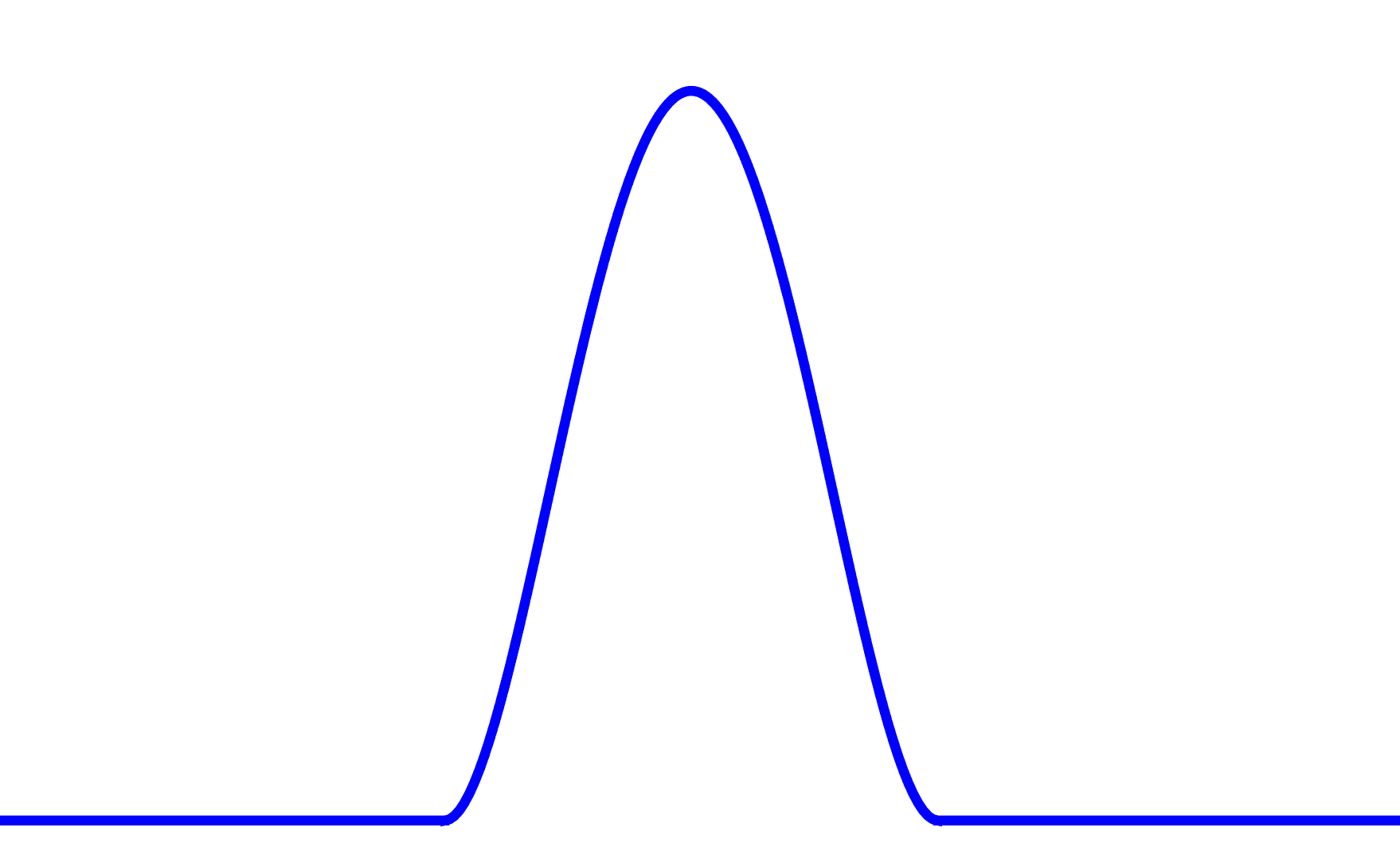}
\includegraphics[width=40mm,height=30mm]{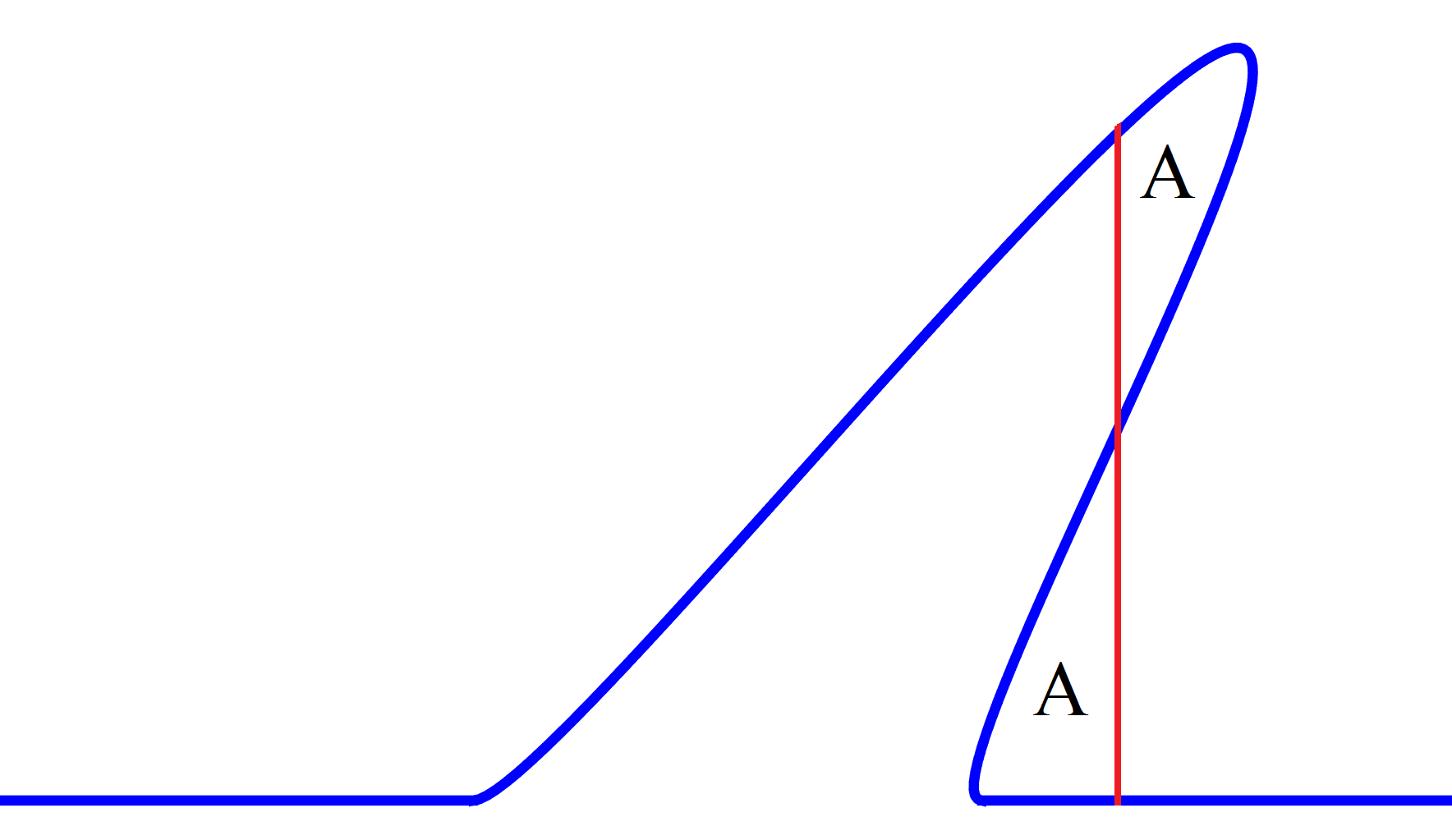}
\includegraphics[width=40mm,height=30mm]{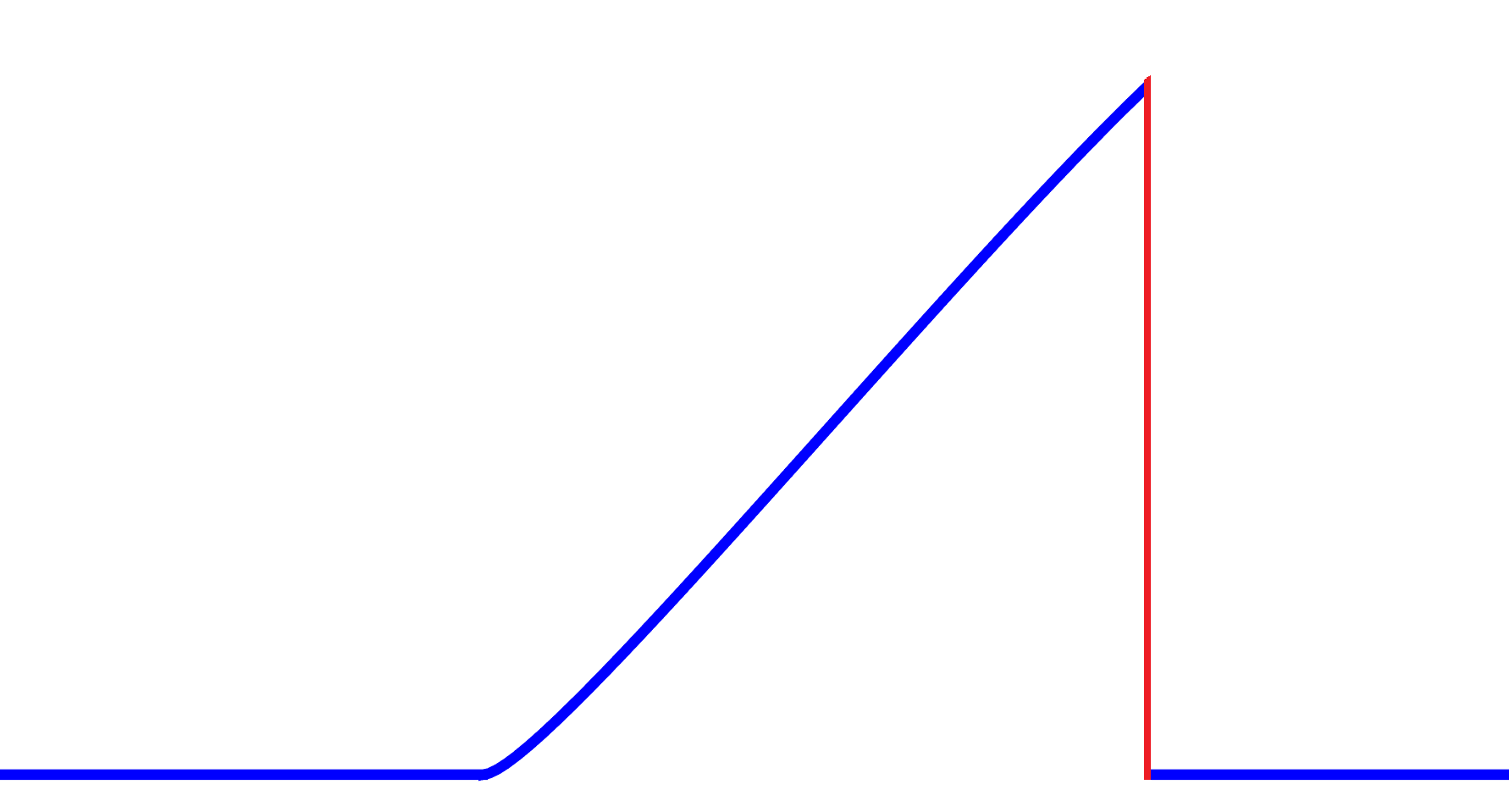}
\end{center}
\caption{ An illustration of the equal area principle. }
\label{LeVeque Plot}
\end{figure} 

We begin understanding how the equal area principle can be an asset for numerical methods by studying Figure \ref{LeVeque Plot} and by viewing (\ref{CharFlow1}) as the parametric curve
\begin{equation}
(x(x_0,t),u(x_0))=(x_0+F'(g(x_0))t,g(x_0)). \label{flow}
\end{equation}
First observe that the time evolution of (\ref{flow}) appears to preserve geometric smoothness.  We will show that this is indeed the case, provided $F$ and $g$ possess sufficient regularity. As discussed in \cite{parsmooth}, a parametric curve has geometric smoothness of order $n$ provided each component has $n$ continuous derivatives and that the velocity is never $\vec{0}$. It is easy to see that both components of (\ref{flow}) have $n$ continuous derivatives provided $F\in C^{n+1}$ and $g\in C^n$, and the velocity vector $(1+F''(g(x_0))g'(x_0)t,g'(x_0))\neq0$ for any $x_0$. Therefore, (\ref{flow}) indeed preserves smoothness as time evolves, provided $F$ and $g$ are sufficiently smooth.  Second we see from Figure \ref{LeVeque Plot} that the shock position is located by finding regions of equal area,  labeled by $A$. Therefore, computing the shock location numerically will require accurate integration of these two regions.  These two observations form the basis of our proposed framework. If we allow the curve to overturn and remain smooth, we maintain our ability to use high-order parametric interpolation which lends itself to accurate integration and thus an accurate representation of the weak solution. It is from this perspective that we see how an equivalent parametric interpolation problem for solving (\ref{PDE}) can be formulated. Before presenting the details of our numerical framework, we first discuss other numerical methods which can be used to solve (\ref{PDE}).

Due to a lack of rigorous treatment there are currently very few numerical methods which utilize the equal area principle. There are, however, many other methods which have proven useful for solving (\ref{PDE}) numerically. For example: finite volume methods, \cite{Spectral}, \cite{eno}, \cite{Maximum} and \cite{LevFinite}, discontinuous galerkin, \cite{ShuRunge}, particle methods, \cite{Seibold} , and high resolution finite difference techniques, \cite{Highres}, \cite{Maximum} and \cite{Highres2}, to just name a few. There are desirable properties that these methods aim to possess, such as having high accuracy near shocks with sharp features, preserving a maximum principle, being conservative or having scalability to higher order, all while staying computationally efficient. A particularly relevant example is the work of Seibold and Farjoun in \cite{Seibold}. Their approach utilizes the equal area principle with linear interpolation to create a method which obtains second order accuracy in the shock position. Although the emphasis of their work is on particle management, its core philosophy inspired much of what is presented here. This leads us to discuss further details of our proposed numerical framework.

We begin with the crucial observation that the two curves in Figure \ref{LeVeque Plot} are given exactly by (\ref{flow}). Also, the curve, its tangents and higher order derivatives along with anti-derivatives are given purely in terms of $F$ and $g$. Therefore, we can use the exact data from (\ref{flow}) to construct a parametric polynomial representation of the solution at any time $t$. This, however, requires that the weak solution can always be obtained from (\ref{flow}), meaning, the equal area principle must holds for any initial condition $g$ and flux function $F$ in the desired class. Therefore, before we can properly discuss the interpolation framework, we must study the equal area principle in more detail. 

The equal area principle, or equal area property, is well known within the hyperbolic conservation law community, with several notable contributions reinforcing its validity. For example, LeVeque in \cite{LeVeque} introduces the equal area principle as a useful tool for finding the shock position. In \cite{Equal} the authors prove a local equivalence between the Rankine-Hugoniot condition and the equal area principle. The local equivalence proof in \cite{Equal}, however, relies on the implicit function theorem with no clear extension argument available, therefore this result does not provide insight into the long term validity of the equal area principle. In \cite{Whit}, Whitham proves that the equal area principle produces the desired shock speed when solving a Riemann problem with piecewise constant states. This result holds for all uniformly convex flux function $F$ and for all time, but not for general initial conditions $g(x)$.  Therefore, before numerical methods which employ the equal area principle can be relied upon, further analytical justification is required.

%%For the majority of this paper we will view equation (\ref{CharFlow1}) as the parametric curve
%\begin{equation}
%(x(x_0,t),u(x_0))=(x_0+F'(g(x_0))t,g(x_0)),
%\end{equation}
%parametrized by $x_0$. It is from this perspective that we see how an equivalent parametric interpolation problem can be formulated. 

 This brings us to the first result of our paper; we show that the equal area principle holds for all initial conditions $g(x)$ and all time $t$, provided the shock remains isolated. To achieve this, we introduce an extension of the equal area principle and prove its equivalence with the Rankine-Hugoniot condition. This extension will also prove useful for the development of numerical methods. Our second result pertains to interacting shocks. Here we prove that the equal area principle agrees with the Rankine-Hugoniot condition as well. With these results we can conclude that for all time $t$ the parametric curve (\ref{flow}) can be used to obtain the correct weak solution of (\ref{PDE}) by applying the equal area principle. Once proven, we have all of the required ingredients to construct numerical schemes which rely entirely on the parametric interpolation of (\ref{flow}).

Throughout this paper we rely on a few key analytical results associated with equation (\ref{Char1}).  Utilizing the theory of non-smooth systems and differential inclusion, developed by Fillipov \cite{Filli},  Dafermos \cite{Daf} made significant progress on the understanding of the solution structure of (\ref{Char1}). For example, Dafermos proved that characteristics can only propagate at their classical characteristic speed, $\dot{x}=F'(u)$, or at the speed determined by the Rankine-Hugoniot condition as discussed above.  Of particular relevance is Theorem 4.1 of \cite{Daf}, which shows that $S(t)$ is piecewise $C^1$ and that the only points where differentiability is lost occurs when there is interaction with another shock.  These results are pivotal in proving the equivalence between the Rankine-Hugoniot jump condition and the equal area principle.

This paper is organized as follows: first, in section \ref{Justif}, we work through a simple, yet non-trivial example justifying the equal area principle and then, in section \ref{global} we proceed to prove that the equal area principle is equivalent to the Rankine-Hugoniot condition for isolated shocks. In section \ref{GEAP} we present a more general form of equal area principle and use it to prove equivalence with the Rankine-Hugoniot condition when shocks interact. Finally in section \ref{Numerics} we lay the foundation for parametric interpolation methods which relies on our main results from sections \ref{global} and \ref{GEAP}. We then conclude with some numerical results and a discussion in section \ref{Disc}.
%we provide the foundation for novel numerical methods which rely on the theory presented in this paper. We then conclude with a discussion in section \ref{Disc}.

\section{Motivating Example}\label{Justif}

We begin by solving the Riemann problem (\ref{BurgerEx1}) first using the method of characteristics and employing the Rankine-Hugoniot condition, then again using the equal area principle. We show that the resulting equations for the shock position, $S(t)$, are the same. The following example does not have a piecewise constant initial condition, and we are seeking an equation for the shock position for all $t>0$. Therefore, the results provided in \cite{Equal} and \cite{Whit} do not account for this case. 

Consider the following simple Cauchy problem,
\begin{equation}
\begin{cases}
u_t+\left(\frac{u^2}{2}\right)_{x}=0,\quad \text{on $\mathbb{R}\times(0,\infty)$}\\       
u(x,0)=g(x)\quad \text{on $\mathbb{R}\times\{0\}$},\label{BurgerEx1}
\end{cases}
\end{equation}
where the initial condition $g(x)$ is defined by.
\begin{equation}g(x)=
\begin{cases}
0 \quad \text{for $x<0$} \label{IC1}\\
x \quad \text{for $0\leq x \leq 1$}\nonumber\\
0 \quad \text{for $x>1$}\nonumber
\end{cases}
\end{equation}

\begin{figure}[!ht]
\begin{center}
\includegraphics[width=80mm,height=40mm]{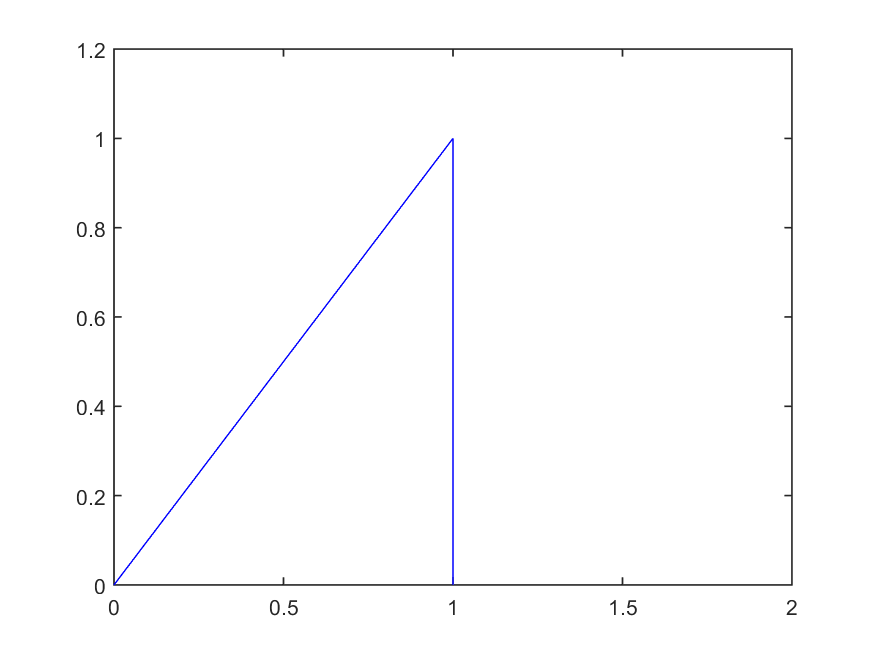}
\end{center}
\caption{A plot of the initial condition $g(x)$ from problem (\ref{BurgerEx1})}
\label{Ex1}
\end{figure} 

A basic application of the method of characteristics and utilizing the Rankine-Hugoniot condition gives the shock position at time $t$,
\begin{equation*}
S(t)=\sqrt{1+t}.
\end{equation*}

We now derive the same equation for the shock position by utilizing the equal area principle. The method of characteristics tells us that any point on the initial curve $(x(s,0), u(s))$ maps to $(x(s,0) + F^{\prime}(u(s))t,u(s))=(x(s,0) + u(s)t,u(s))$ at time $t$.  We define the Burgers' equation flow map $\Phi_B:\mathbb{R}^3\rightarrow\mathbb{R}^2$ by
\begin{equation}
\Phi_B (x(s,0),u(s),t)=(x(s,0) + u(s)t,u(s)),
\end{equation}
which is a well-defined map for all time $t>0$.  Next we find a suitable parametrization of the initial curve $(x(s,0),u(s))$, for example
\begin{equation}(x(s,0),u(s))=
\begin{cases}
(0,0) \quad \text{for $s < 0$}\\ \label{IC2}
(s,s) \quad \text{for $s \in [0,1]$}\\
(1,s-1) \quad \text{for $s \in [1,2]$}\\
(0,0) \quad \text{for $s > 2,$}
\end{cases}
\end{equation}
yielding, after time $t$, the parametrization 
\begin{equation}\Phi_B(x(s,0),u(s),t)=
\begin{cases}
(0,0) \quad \text{for $s < 0$}\\ \label{Map1}
(s+st,s) \quad \text{for $s \in [0,1]$}\\
(1+(s-1)t,s-1) \quad \text{for $s \in [1,2]$}\\
(0,0) \quad \text{for $s > 2$.}
\end{cases}
\end{equation}
The graph of $\Phi_B(x(s,0),u(s),1)$ is shown below in Figure \ref{Ex1Overturned}.

\begin{figure}[!ht]
\begin{center}
\includegraphics[width=63mm,height=40mm]{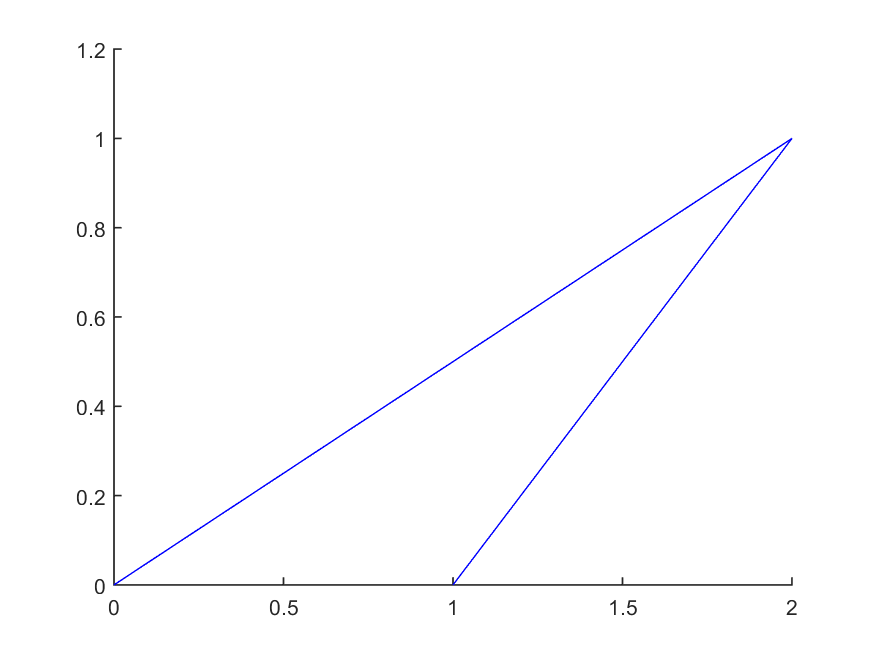}
\includegraphics[width=63mm,height=40mm]{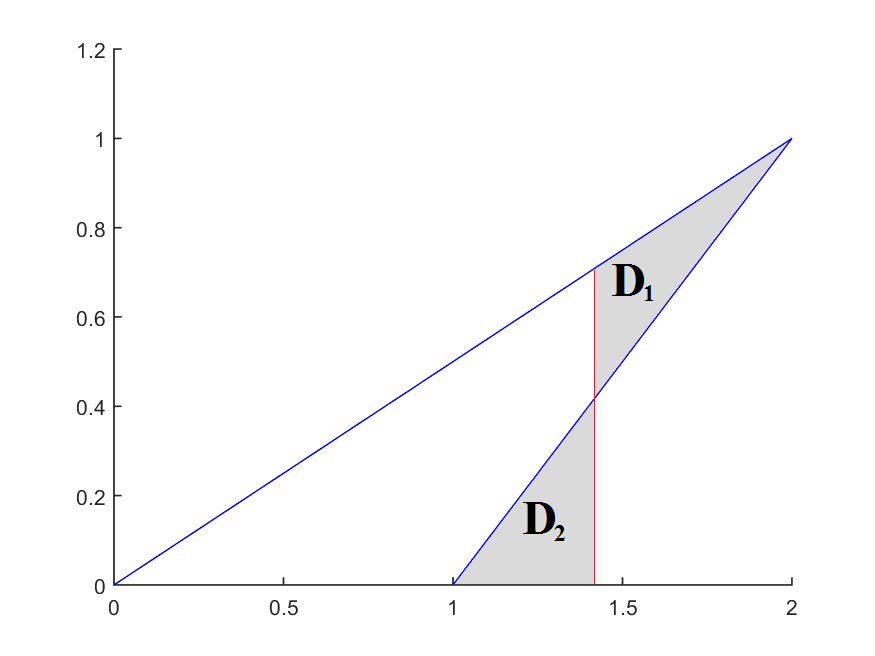}
\end{center}
\caption{A plot of $\Phi_{B}(x(s,0),g(s),1)$ and approximate equal area sketch.}
\label{Ex1Overturned}
\end{figure}

The equal area principle states that the shock location is given by the vertical line which splits the overturned curve into two regions with equal area. Referencing Figure \ref{Ex1Overturned}, this says that the red line is the shock location if the area of $D_1$ is equal to the area of $D_2$. We show the vertical line must be positioned at $S(t)=\sqrt{1+t}$, the same as our above computation given by the Rankine-Hugoniot condition.

%\begin{figure}[!ht]
%\begin{center}
%\includegraphics[width=80mm,height=40mm]{Ex1OverturnedShock}
%\end{center}
%\caption{The shock locations shown for $\Phi_{B}(s,g(s),1)$, where $g(x)$ is the initial condition from (\ref{BurgerEx1})}
%\label{Ex1Overturned}
%\end{figure} 

After time $t$ equation (\ref{Map1}) says that the top line in Figure \ref{Ex1Overturned} is given by the function $u(x)=\frac{x}{1+t}$ and that the lower line is given by $u(x)=\frac{x-1}{t}$. Requiring that $Area(D_1)$=$Area(D_2)$ gives us the equation
\begin{equation}
\int_{S(t)}^{1+t}{\frac{x}{1+t}-\frac{x-1}{t}dx}=\int_{1}^{S(t)}{\frac{x-1}{t}dx},
\end{equation}
where $S(t)$ denotes the shock position. An elementary computation leads us to $S^{2}(t)=1+t$ and since $u>0$ we take the positive root which results in $S(t)=\sqrt{1+t}$, as desired.

Next we consider a more generic overturned curve $(x(s,0),u(s))$, as done in \cite{Equal}, and extend the idea used in our motivating example above to the general flow map 
\begin{equation}
\Phi(x(s,0),u(s),t)=\left(x(s,0) + F^{\prime}(u(s))t,u(s)\right),\label{Map2}
\end{equation}
and prove that the vertical line giving zero signed area moves at Rankine-Hugoniot speed, provided the shock remains isolated. For notational purposes we write \\ $\Phi(x(s,0),u(s),t)=(x(s,t),u(s))$. We note that (\ref{Map2}) is more general than (\ref{flow}), as we allow the initial curve to be multivalued.
\begin{figure}[!ht]
\begin{center}
\includegraphics[width=100mm,height=70mm]{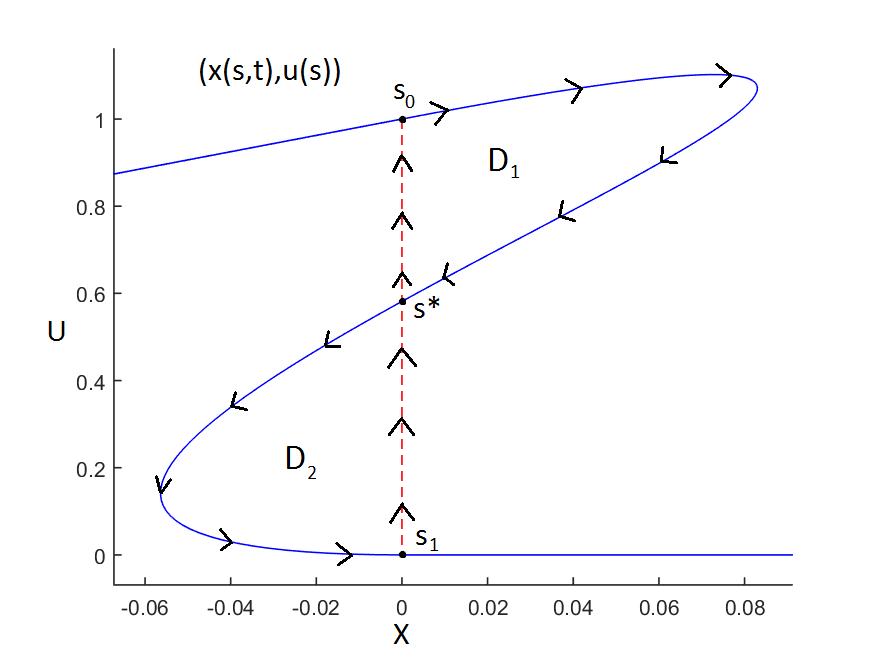}
\end{center}
\caption{Generic overturned parametric curve $(x(s,t), u(s))$}
\label{Overturned2}
\end{figure} 
\section{Global Equivalence}\label{global}
In this section we provide a proof that the equal area principle is equivalent to the Rankine-Hugoniot condition for any $g$ and $F$ given in (\ref{PDE}) for all time $t>0$, provided the shock remains isolated. More precisely,  let $(x(s,t),u(s))$ be a piecewise smooth parametrized curve flowing under (\ref{Map2}) and suppose it is overturned, that is, $t>t_s$, as in Figure \ref{Overturned2}. Then the vertical line creating equal area, defined below, moves at the speed given by the Rankine-Hugoniot condition for all time $t>t_s$.  

Before we begin with the proof, we discuss the shape of the curve \\$(s+F'(g(s))t,g(s))$, specifically in the overturned region. The Cauchy problem (\ref{PDE}) only considers uniformly convex flux function $F$. This implies that $F'(u)>0$ which results in its inverse, $F'^{-1}(u)$, being a well-defined increasing function. Now, suppose we have a shock at $x_0$ between states $u_L$ and $u_R$. We first parametrize the solution to the left of the shock by $(s+F'(g(s))t,g(s))$, for $s\in[0,s_0]$, and to the right of the shock with $(s+F'(g(s))t,g(s))$, for $s\in[s_1,1]$. We also parametrize the shock by $(x_0,z)$, for $z\in[u_R,u_L]$ and assume that $1+F''(g(s))g'(s)t_0>\alpha>0$, for some $\alpha>0$ and for all $s\in[0,s_0]\cup[s_1,1]$. Under the flow (\ref{flow}), the shock line gets mapped to $(x_0+F'(z)t,z)$, for $z\in[u_R,u_L]$, which can be written as the function
\begin{equation}
u(x)=F'^{-1}\left(\frac{x-x_0}{t}\right).\label{func}
\end{equation} 
Therefore, by our assumptions on $F$, we have that equation (\ref{func}) defines a function for all values of $t>t_0$, meaning any vertical line crosses $u(x)$ at most once. Also by our assumptions outside the shock we have that $1+F''(g(s))g'(s)t>0$ for all $t\in[t_0,t_0+\Delta t]$, for some $\Delta t>0$ small enough. This implies that $(s+F'(g(s))t,g(s))$ to the left and right of the shock (\ref{func}) remain functions for $t\in[t_0,t_0+\Delta t]$, and thus, each portion intersects any vertical line at most once. Therefore, in the overturned region of an isolated shock we expect to have exactly three intersections with the vertical shock line. This brings us to the definition of the an S-curve.

\begin{dfn}[S-curve] \label{EAC}
We say a parametrized curve $( x(s,t), u(s) )$ for $s \in \mathbb{R}$ is an S-curve at time $t$ if each of its overturned regions intersect any vertical line at exactly three points along the parametrization, $s_0<s^*<s_1$, with $x_s(s_0,t)>0, \, x_s(s_1,t)>0 $ and $x_s(s^*,t)<0$, such that $u(s_0)>u(s^*)>u(s_1)$. Additionally we require that $(x(s,t),u(s))$ does not cross itself, implying that the vertical line splits the region bounded by $s_0$ and $s_1$ into two distinct closed regions.  Furthermore, we say that $(x(s,t),u(s))$ is an S-curve on an interval of time $t\in(t_0,t_1)$ provided $(x(s,t),u(s))$ is an S-curve at each time $t$ in the given interval.
\end{dfn}

\begin{rem}
It follows by the uniform convexity of $F$ that the shape of the S-curve will be maintained, provided the shock remains isolated.
\end{rem}

We proceed to prove that the equal area principle applied to S-curves is equivalent to the Rankine-Hugoniot condition. We begin this discussion by referencing Figure \ref{Overturned2}. Let the first region, $D_1$, be defined by the portion of $\mathbb{R}^2$ enclosed by the parametrized curve from $s_0$ to $s^*$ and the line from $(x(s^*),u(s^*))$ to $(x(s_0),u(s_0))$. Green's theorem applied to a clockwise oriented close curve $(y(s),z(s))$ says the area enclosed within the curve is given by $\frac{1}{2}\int_{C}{yz'-y'zds}$. Since $D_1$ is enclosed by two curves, after some simplifying, Green's theorem says the area of $D_1$ is given by
\begin{equation}
A_{D_1}=\frac{1}{2}\int_{s_{0}}^{s^*}{x(s,t)u'(s)-x_s(s,t)u(s) \,\text{ds}}+\frac{1}{2}
\left((u(s_0)x(s^*,t)-x(s_0,t)u(s^*)\right)\label{AD1}
\end{equation}, and similarly the area of $D_2$ is given by
\begin{equation}
A_{D_2}=\frac{1}{2}\int_{s^{*}}^{s_1}{x_s(s,t)u(s)-x(s,t)u'(s) \,\text{ds}}+\frac{1}{2}
\left((x(s^*,t)u(s_1)-u(s^*)x(s_1,t)\right).\label{AD2}
\end{equation}
Combining (\ref{AD1}) and (\ref{AD2}) we have an equation for the difference between the two areas, referred to as the signed area, given as $A_{Dif}(s_0,s^*,s_1,t)=A_{D_1}-A_{D_2}$, yielding
\begin{align}
A_{Dif}(s_0,s^*,s_1,t)=&\frac{1}{2}\int_{s_{0}}^{s_1}{x(s,t)u'(s)-x_s(s,t)u(s) \,\text{ds}} \label{ADif} \\
&+\frac{1}{2}\left(x(s^*,t)(u(s_0)-u(s_1))+u(s^*)(x(s_1,t)-x(s_0,t))\right). \nonumber
\end{align}

%%\begin{rem}
%The fourth argument of $A_{Dif}(s_0,s^*,s_1,t)$ is time, therefore under the flow $\Phi$, from equation (\ref{Map2}), equation (\ref{ADif}) remains the same except that $x(s)$ would be replaced by $x(s,t)$. We also note the difference between equation (\ref{ADif}) and the signed area formula presented in \cite{Equal}, provided the shock remains isolated.
%\end{rem}

With these definitions in place we are able to present the following Lemma.
\begin{lem}
Suppose $(x(s,t),u(s))$ is a $C^1$ S-curve for all $t>t_{s}$ with the points along the parametrization $s_0(t),s^*(t),s_1(t)$ satisfying the conditions prescribed in Definition \ref{EAC} at each time $t>t_s$. Then, within each overturned region respecivelty, the vertical line from $x(s_0(t),t)$ to $x(s_1(t),t)$ which yields $A_{Dif}(s_0(t),s^*(t),s_1(t),t)=0$ must move at the speed given by the Rankine-Hugoniot condition provided the shock remains isolated from other shocks. \label{lem1}
\end{lem}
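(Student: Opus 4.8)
The plan is to show that the two conditions $A_{Dif}=0$ and the Rankine--Hugoniot ODE $\dot S = \tfrac{F(u_L)-F(u_R)}{u_L-u_R}$ are equivalent by differentiating the constraint $A_{Dif}(s_0(t),s^*(t),s_1(t),t)=0$ in time and simplifying. First I would set $x_0(t):=x(s_0(t),t)=x(s^*(t),t)=x(s_1(t),t)$ — this is the common $x$-coordinate of the three intersection points, i.e. the candidate shock position $S(t)$ — and note the left and right states are $u_L=u(s_0(t))$, $u_R=u(s_1(t))$. Differentiating $A_{Dif}=0$ with respect to $t$ gives $\partial_{s_0}A_{Dif}\,\dot s_0 + \partial_{s^*}A_{Dif}\,\dot s^* + \partial_{s_1}A_{Dif}\,\dot s_1 + \partial_t A_{Dif}=0$.

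Next I would compute each of these partials from the explicit formula (\ref{ADif}). The key simplification is that the three intersection points all lie on the vertical line $x=x_0(t)$; substituting $x(s_0,t)=x(s^*,t)=x(s_1,t)=x_0(t)$ should cause massive cancellation in $\partial_{s_0}A_{Dif}$, $\partial_{s^*}A_{Dif}$, $\partial_{s_1}A_{Dif}$. In fact I expect $\partial_{s_0}A_{Dif}$ and $\partial_{s_1}A_{Dif}$ to vanish identically on the constraint set (the boundary terms from differentiating the integral limits cancel against the algebraic boundary terms — this is essentially the statement that moving the endpoints \emph{along} the vertical line does not change the enclosed signed area to first order), and $\partial_{s^*}A_{Dif}$ to vanish as well by a similar argument or because $u(s^*)$ drops out. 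That leaves only the explicit $t$-derivative: using $x_t(s,t)=F'(u(s))$ from (\ref{Map2}), one gets $\partial_t A_{Dif}=\tfrac12\int_{s_0}^{s_1}F'(u(s))u'(s)\,ds + \tfrac12 F'(u(s^*))\big(u(s_0)-u(s_1)\big)$; the integral is $\tfrac12\big(F(u(s_1))-F(u(s_0))\big)$ by the fundamental theorem of calculus, so $\partial_t A_{Dif}=\tfrac12\big(F(u_R)-F(u_L)\big)+\tfrac12 F'(u(s^*))(u_L-u_R)$. But this also must be combined with the time-dependence coming through $x_0(t)$ hidden inside those boundary terms, and that is where $\dot S=\dot x_0$ enters.

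More carefully, I would organize the differentiation so that $A_{Dif}$ is regarded as a function of the independent quantity $x_0$ (the position of the vertical line) and $t$, with $s_0,s^*,s_1$ determined implicitly by $x(s_i,t)=x_0$; then $\tfrac{d}{dt}A_{Dif}=\partial_{x_0}A_{Dif}\,\dot x_0 + \partial_t A_{Dif}=0$ gives $\dot x_0 = -\,\partial_t A_{Dif}\big/\partial_{x_0}A_{Dif}$. The quantity $\partial_{x_0}A_{Dif}$ is the rate of change of signed area as the vertical cut slides, which by an elementary geometric argument (width of each region at height-coordinate... actually: sweeping the line right by $dx_0$ adds a strip of the lower region and removes a strip of the upper, of heights $u_L-u(s^*)$ and $u(s^*)-u_R$ respectively) equals $\tfrac12\big((u_L-u(s^*))-(u(s^*)-u_R)\big)$ plus contributions that I expect to telescope to $u(s^*)-\tfrac{u_L+u_R}{2}$ — i.e. $\partial_{x_0}A_{Dif} = u(s^*) - \tfrac{u_L+u_R}{2}$ up to sign, which is nonzero generically. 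Dividing then yields $\dot x_0 = \dfrac{F(u_L)-F(u_R)}{u_L-u_R}$ after the $u(s^*)$ terms cancel between numerator and denominator — precisely Rankine--Hugoniot, and independent of $s^*$ as it must be.

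The main obstacle I anticipate is the bookkeeping of the boundary/endpoint terms: because $s_0,s^*,s_1$ are themselves functions of $t$ (and of $x_0$), Leibniz-rule contributions from the moving limits of the integral in (\ref{ADif}) interact with the algebraic boundary terms, and I must verify carefully that everything not proportional to $F$ or to the geometric strip-area cancels. I would handle this by exploiting the defining property $x(s_i,t)=x_0$ repeatedly (and its $t$-derivative $x_s(s_i,t)\dot s_i + F'(u(s_i))=\dot x_0$) to eliminate $\dot s_0,\dot s^*,\dot s_1$ rather than computing them, and by checking the final answer is manifestly independent of $s^*$ — a good consistency test since the Rankine--Hugoniot speed cannot depend on the intermediate intersection point. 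The isolated-shock hypothesis is used only to guarantee, via the S-curve structure and the remark following Definition \ref{EAC}, that exactly three smooth branches persist so that the differentiation is legitimate on the whole interval $t>t_s$.
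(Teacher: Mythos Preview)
Your overall strategy---differentiate the constraint $A_{Dif}=0$ in time and solve for the speed of the vertical line---is exactly what the paper does. However, several of your specific intermediate predictions are wrong, and if you followed them you would not arrive at the answer cleanly.

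First, the partials $\partial_{s_0}A_{Dif}$, $\partial_{s^*}A_{Dif}$, $\partial_{s_1}A_{Dif}$ do \emph{not} vanish on the constraint set. A direct computation from (\ref{ADif}), using $x(s_0,t)=x(s^*,t)=x(s_1,t)$, gives
\[
\partial_{s_0}A_{Dif}=\tfrac12 x_s(s_0,t)\bigl(u(s_0)-u(s^*)\bigr),\quad
\partial_{s_1}A_{Dif}=\tfrac12 x_s(s_1,t)\bigl(u(s^*)-u(s_1)\bigr),\quad
\partial_{s^*}A_{Dif}=\tfrac12 x_s(s^*,t)\bigl(u(s_0)-u(s_1)\bigr),
\]
none of which is zero. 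Second, your expression for the explicit $t$-derivative of $A_{Dif}$ is incomplete: the integrand $xu'-x_s u$ contributes $F'(u)u'-F''(u)u'\,u$ under $\partial_t$ (you dropped the second term), and the boundary expression contributes an additional $\tfrac12 u(s^*)\bigl(F'(u(s_1))-F'(u(s_0))\bigr)$. Third, in your $(x_0,t)$ reorganization the value of $\partial_{x_0}A_{Dif}$ is not $u(s^*)-\tfrac{u_L+u_R}{2}$; combining the three partials above with $\partial s_i/\partial x_0=1/x_s(s_i,t)$ gives simply $\partial_{x_0}A_{Dif}=u_L-u_R$, with no $u(s^*)$ present. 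Correspondingly, a correct computation of $\partial_t A_{Dif}\big|_{x_0}$ (including the $\dot s_i$ contributions) gives $F(u_R)-F(u_L)$, again free of $u(s^*)$. So there is no ``cancellation of $u(s^*)$ between numerator and denominator'': each piece is already independent of $s^*$, and dividing yields Rankine--Hugoniot directly.

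The paper avoids these pitfalls by first using the vertical-line constraint to rewrite $A_{Dif}=0$ as the single relation $x(s_0(t),t)\bigl(u(s_1)-u(s_0)\bigr)=\int_{s_0}^{s_1}(xu'-x_s u)\,ds$, then taking $\tfrac{d}{dt}$ of both sides. The boundary terms from the moving limits are eliminated not by hoping they vanish but by using $\tfrac{d}{dt}x(s_i(t),t)=x_s(s_i,t)\dot s_i + F'(u(s_i))$ together with $\tfrac{d}{dt}x(s_0(t),t)=\tfrac{d}{dt}x(s_1(t),t)$; the remaining integral $B(t)=\int_{s_0}^{s_1}\bigl(F'(u)u'-uF''(u)u'\bigr)\,ds$ is handled by the substitution $du=u'\,ds$ and one integration by parts. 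Your plan is salvageable and ultimately equivalent, but you need to carry the bookkeeping through rather than guess at which terms disappear.
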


\begin{proof}
By assumption,  $(x(s,t),u(s))$ is an S-curve and the provided functions $s_0(t),s^*(t)$ and $s_1(t)$ yields $A_{Dif}(s_0(t),s^*(t),s_1(t),t)=0$ for $t>t_s$. Also, by assumption that these functions are $C^1$, we have that $x(s_0(t),t)=x_0(s(t),0)+F'(u(s_0(t)))t$ is differentiable for all $t$. Utilizing the property that \\$x(s_0(t),t) = x(s^*(t),t) = x(s_1(t),t)$ for all time $t\geq t_s$, $A_{Dif}(s_0(t),s^*(t),s_1(t),t)=0$ provides us with the equation
\begin{equation}
x(s_0(t),t)(u(s_1(t))-u(s_0(t)))=\int_{s_{0}(t)}^{s_1(t)}{x(s(t),t)u'(s(t))-x_s(s(t),t)u(s(t)) \,\text{ds(t)}.}\label{pf2}
\end{equation}
Our desired quantity is $\frac{d}{dt}x(s_0(t),t)$, therefore we proceed to take the total derivative in time on both sides of equation (\ref{pf2}). Applying the chain rule, the fundamental theorem of calculus, properties of S-curves then cancelling terms from each side we arrive at
\begin{align}
(u(s_1(t))-u(s_0(t)))\frac{d}{dt}x(s_0(t)&,t)=x_s(s_0(t),t)s_0'(t)u(s_0(t))-x_s(s_1(t))s_1'(t)u(s_1(t))\nonumber\\
&+\int_{s_{0}(t)}^{s_1(t)}{\frac{\partial}{\partial t}\biggr(x(s(t),t)u'(s(t)-x_s(s(t),t)u(s(t))\biggr) \,\text{ds(t)}}\label{Lemma1EQ}
\end{align}
Equation (\ref{flow}) yields the relation $\frac{d}{dt}x(s(t),t)=x_s(s(t),t)s'(t)+F'(u(s(t))$. Using this and $\frac{d}{dt}x(s_0(t),t)=\frac{d}{dt}x(s_1(t),t)$, which is given by assumption, equation (\ref{Lemma1EQ}) becomes
\begin{equation}
2(u(s_1(t))-u(s_0(t))\frac{d}{dt}x(s_0(t),t)=F'(u(s_1(t))u(s_1(t))-F'(u(s_0(t))u(s_0(t)) + B(t),\label{AB}
\end{equation}
where 
\begin{equation}
 B(t)=\int_{s_{0}(t)}^{s_1(t)}{\frac{\partial}{\partial t}\biggr(x(s(t),t)u'(s(t))-x_s(s(t),t)u(s(t))\biggr) \,\text{ds(t)}}.\label{Bt}
\end{equation} 
We proceed to simplify (\ref{Bt}). We begin by utilizing $\frac{\partial}{\partial t}x(s(t),t)=F'(u(s(t))$, and that $\frac{\partial}{\partial t}u(s(t))=0$ to obtain

\begin{equation*}
B(t)=\int_{s_0(t)}^{s_1(t)}{F'(u(s(t)))u'(s(t))-u\frac{d}{ds(t)}\left(F'(u(s(t)))\right) ds(t)}
\end{equation*} 
Then, rewriting and applying the substitution $du=u'(s(t))ds(t)$ we get
 \begin{align*}
B(t)&=\int_{s_0(t)}^{s_1(t)}{F'(u(s(t)))u'(s(t))-u(s(t))F''(u(s(t)))u'(s(t)) ds(t)}\\
 &=\int_{u(s_0(t))}^{u(s_1(t))}{F'(u)-uF''(u) du}
 \end{align*}
From here, integration by parts and simplifying yields
\begin{equation*}
B(t)=2\biggr(F(u(s_1(t)))-F(u(s_0(t)))\biggr)+\left[u(s_0(t))F'(u(s_0(t)))-u(s_1(t))F'(u(s_1(t)))\right]
\end{equation*}
Plugging this into equation (\ref{AB}) we see that the second term of $B(t)$ cancels, then dividing through by $2[u(s_1(t))-u(s_0(t))]\neq0$, yields
%\begin{align*}
%2\frac{d}{dt}\big(x(s_0(t))\big)(u(s_0(t))-u(s_1(t))&=F'(u(s_0(t))u(s_0(t))-F'(u(s_1(t))u(s_1(t))-\\
%2\biggr(F(u(s_1(t)))+F(u(s_0(t)))\biggr)&+\left[u(s_1(t))F'(u(s_1(t)))-u(s_0(t))F'(u(s_0(t)))\right]
%%\end{align*}
%Cancelling terms and diving through by $2[u(s_0(t))-u(s_1(t))]$, gives us
\begin{equation}
\frac{d}{dt}\big(x(s_0(t),t)\big)=\frac{F(u(s_0(t))-F(u(s_1(t))}{u(s_0(t))-u(s_1(t))}
\end{equation}
as desired.
\end{proof}

\begin{lem}
Suppose we have a weak solution $u(x,t)$ of (\ref{PDE}), for $t\geq t_0$, containing exactly one shock at position $S(t)$ of the form
\begin{equation}u(x,t)=
\begin{cases}
u_L(x,t) \quad \text{for $x<S(t)$}\\ \label{Weak1}
u_R(x,t) \quad \text{for $x>S(t)$},
\end{cases}
\end{equation}
where $u_L(x,t)$ and $u_R(x,t)$ are smooth functions. Then, there exists $C^1$ functions $s_0(t), s^*(t)$ and $s_1(t)$ such that any smooth overturned S-curve joining the states $u_L$ to $u_R$ with zero signed area about the shock at $t=t_0$ satisfies\\ $A_{Dif}(s_0(t),s^*(t),s_1(t),t)=0$ for all $t\geq t_0$.\label{lem2}
\end{lem}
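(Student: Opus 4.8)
The plan is to exhibit the functions $s_0(t),s^*(t),s_1(t)$ directly and then to show that $h(t):=A_{Dif}(s_0(t),s^*(t),s_1(t),t)$ is constant in time, so that the hypothesis $h(t_0)=0$ forces $h\equiv 0$ on $[t_0,\infty)$.

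First I would take $s_0(t),s^*(t),s_1(t)$ to be the three parameter values, ordered $s_0(t)<s^*(t)<s_1(t)$ as in Definition \ref{EAC}, at which the overturned S-curve meets the vertical line $x=S(t)$. Because the smooth branches of the weak solution are transported by characteristics, $x(s,t)=x(s,0)+F'(u(s))t$ is affine (hence smooth) in $t$ and $C^1$ in $s$; moreover $S(t)$ is $C^1$ on any interval on which the shock stays isolated, by Theorem 4.1 of \cite{Daf}. Applying the implicit function theorem to $x(s,t)-S(t)=0$ at each of the three roots, where $x_s\neq 0$ by the S-curve conditions $x_s(s_0,t),x_s(s_1,t)>0$ and $x_s(s^*,t)<0$, produces local $C^1$ branches; these extend over the whole time interval since, by the uniform convexity of $F$ (the remark after Definition \ref{EAC}), the S-curve shape is preserved, so the three roots stay distinct and the labeling is consistent by continuity. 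By construction $u(s_0(t))=u_L(S(t),t)=:u_0$ and $u(s_1(t))=u_R(S(t),t)=:u_1$.

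Next I would insert $x(s_0(t),t)=x(s^*(t),t)=x(s_1(t),t)=S(t)$ into (\ref{ADif}); the term $u(s^*)(x(s_1,t)-x(s_0,t))$ then vanishes and
\begin{equation*}
h(t)=\frac{1}{2}\int_{s_0(t)}^{s_1(t)}\bigl(x(s,t)u'(s)-x_s(s,t)u(s)\bigr)\,ds+\frac{1}{2}S(t)\bigl(u_0-u_1\bigr).
\end{equation*}
Differentiating in $t$ is the calculation from the proof of Lemma \ref{lem1} run forward: the Leibniz rule contributes boundary terms at $s_0(t)$ and $s_1(t)$, which I would simplify using $x_s(s_i(t),t)\,s_i'(t)=S'(t)-F'(u(s_i(t)))$ (from $\frac{d}{dt}x(s_i(t),t)=S'(t)$), while the interior term is handled with $\partial_t x=F'(u)$, $\partial_t u=0$, the substitution $du=u'(s)\,ds$, and an integration by parts using the antiderivative $2F(u)-uF'(u)$ of $F'(u)-uF''(u)$. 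The terms proportional to $S(t)\,\frac{d}{dt}(u_1-u_0)$ cancel against those coming from differentiating $\frac{1}{2}S(t)(u_0-u_1)$, and after the $u_iF'(u_i)$ terms also cancel one is left with
\begin{equation*}
h'(t)=-(u_1-u_0)\left(S'(t)-\frac{F(u_1)-F(u_0)}{u_1-u_0}\right).
\end{equation*}
Since $u(x,t)$ is a weak solution with an isolated shock, $S'(t)$ obeys the Rankine-Hugoniot condition $S'(t)=\bigl(F(u_1)-F(u_0)\bigr)/(u_1-u_0)$, hence $h'(t)=0$; therefore $h(t)\equiv h(t_0)=0$ for all $t\geq t_0$, as claimed.

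The delicate point I expect is the global construction and $C^1$ regularity of $s_0(t),s^*(t),s_1(t)$ --- in particular ruling out collision or loss of one of the three intersection points as $t$ grows, which I would control through the preserved S-curve geometry together with the standing assumption that the shock stays isolated (so that $S(t)$ remains $C^1$ via Theorem 4.1 of \cite{Daf}). The time differentiation itself is a routine variant of the computation already carried out for Lemma \ref{lem1}, but run in the opposite logical direction: there one assumed $h\equiv 0$ and deduced the shock speed, here one assumes the shock speed and deduces $h'\equiv 0$.
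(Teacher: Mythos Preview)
Your proposal is correct and follows essentially the same approach as the paper: define $s_0(t),s^*(t),s_1(t)$ as the three intersections of the S-curve with the vertical line $x=S(t)$, invoke Dafermos for the $C^1$ regularity of $S(t)$ (the paper appeals to smoothness of the parametrization together with \cite{Daf} where you invoke the implicit function theorem more explicitly), and then differentiate $A_{Dif}$ in time using the identities already established in Lemma~\ref{lem1}, so that the Rankine--Hugoniot condition forces $\frac{d}{dt}A_{Dif}=0$. Your closing remark that this is the Lemma~\ref{lem1} computation run in the opposite logical direction is exactly how the paper uses it as well.
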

\begin{proof}

Parametrizing $u_L(x,t_0)$ and $u_R(x,t_0)$ we obtain $\left(x_L(s,t_0),u_L(s)\right)$ and \\
$\left(x_R(s,t_0),u_R(s)\right)$ respectively. We combine these into a single smooth S-curve\\
$\left(x(s,t_0),u(s))\right)$ which smoothly connects $\left(x(s_0,t_0),u(s_0)\right)$ to $\left(x(s_1,t_0),u(s_1)\right)$ such that $A_{Dif}(s_0,s^*,s_1,t_0)=0$ and $x(s_0,t_0)=x(s^*,t_0)=x(s_1,t_0)$. Such a curve can easily be constructed (e.g. with parametric cubics.) Let $s_0(t)$ and $s_1(t)$ be functions such that $\left(x(s_0(t),t),u(s_0(t))\right)$ and $\left(x(s_1(t),t),u(s_1(t))\right)$ lie on the top and bottom of the shock respectively for all $t>t_0$.  These functions must be $C^1$ since we have a smooth parametrization and the shock position is continuously differentiable if left isolated, see \cite{Daf}. Furthermore, since $x(s_0(t),t)$ and $x(s_1(t),t)$ lie on the shock, we have that $\frac{d}{dt}x(s_0(t),t)=\frac{d}{dt}x(s_1(t),t)=\frac{F(u(s_0(t))-F(u(s_1(t))}{u(s_0(t))-u(s_1(t))}$. We now prove that $A_{Dif}(s_0(t),s^*(t),s_1(t),t)=0$ for all $t\geq t_0$..

By assumption we have that $A_{Dif}(s_0(t_0),s^*(t_0),s_1(t_0),t_0)=0$, therefore we simply need to show that $\frac{d}{dt}A_{Dif}(s_0(t),s^*(t),s_1(t),t)=0$ for all $t>t_0$. Since we start with an S-curve at $t=t_0$, we have that the shape is preserved by our assumption that $F$ is uniformly convex and that the shock remains isolated. This means that the shock line will intersect the overturned curve exactly three times, satisfying $x(s_0(t),t)=x(s^*(t),t)=x(s_1(t),t)$ for all $t>t_0$. Differentiating $A_{Dif}$ in time yields 
\begin{align*}
\frac{d}{dt}A_{Dif}(s_0(t),s^*(t),s_1(t),t)&=x_s(s_0(t),t)s_0'(t)u(s_0(t))-x_s(s_1(t),t)s_1'(t)u(s_1(t))\\
&+\frac{d}{dt}x(s_1(t),t)\biggr((u(s_0(t))-u(s_1(t))\biggr)\\
&+\int_{s_{0}(t)}^{s_1(t)}{\frac{d}{dt}\biggr(x_s(s(t),t)u(s(t))-x(s(t),t)u'(s(t))\biggr) \,\text{ds}}
\end{align*}
Using again that $\frac{d}{dt}x(s_0(t),t)=x_s(s_0(t),0)s_0'(t)+F'(u(s_0(t)))=\frac{d}{dt}x(s_1(t),t)$ and other computations from Lemma 1 we reach
\begin{align*}
\frac{d}{dt}A_{Dif}&=2\frac{d}{dt}x(s_0(t),t)(u(s_0(t))-u(s_1(t)))+F'(u(s_1(t)))u(s_1(t))\\
&-F'(u(s_0(t)))u(s_0(t))+2\biggr(F(u(s_1(t)))-F(u(s_0(t)))\biggr)\\
&-F'(u(s_1(t)))u(s_1(t))+F'(u(s_0(t)))u(s_0(t))
\\
&=2\frac{d}{dt}x(s_0(t))(u(s_0(t))-u(s_1(t)))-2\biggr(F(u(s_1(t)))-F(u(s_0(t)))\biggr)
\end{align*}
Plugging in our assumption that the shock is moving according to the Rankine-Hugoniot condition we obtain 
\begin{equation}
\frac{d}{dt}A_{Dif}(s_0(t),s^*(t),s_1(t),t)=0.
\end{equation}
Therefore since $A_{Dif}(s_0(t_0),s^*(t_0),s_1(t_0),t_0)=0$, the shock line splits the S-curve into two regions with equal area for all $t\geq t_0$.
\end{proof}

\vspace{.2 cm}

Combining Lemma \ref{lem1} and Lemma \ref{lem2} we obtain the following result.

\begin{thm}[The Global Equivalence Theorem]\label{GET}
Suppose 
\begin{equation}u(x,t)=
\begin{cases}
u_L(x,t) \quad \text{for $x<S(t)$}\\
u_R(x,t) \quad \text{for $x>S(t)$},
\end{cases}
\end{equation}
is a weak solution of (\ref{PDE}) with $u_L(x,t)$ and $u_R(x,t)$ smooth functions.  Let $S(t)$ be the shock position at time $t\geq t_0$ which does not interact with any other shocks. Then $\dot{S}(t)=\frac{F(u_{max})-F(u_{min})}{u_{max}-u_{min}}$ if and only if there exist three differentiable functions $s_0(t),s^*(t)$ and $s_1(t)$ satisfying the conditions to make $( x(s,t),u(s) )$ an S-curve with zero signed area about $S(t)$ for $t\geq t_0$ with $\Big(x(s,t),u(s)\Big)=\Big(x,u_L(x,t))\Big)$, for $x<S(t)$ and 
$\Big(x(s,t),u(s)\Big)=\Big(x,u_R(x,t))\Big)$, for $x>S(t)$.

\end{thm}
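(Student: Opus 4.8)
The plan is to obtain Theorem~\ref{GET} as essentially a bookkeeping combination of the two preceding lemmas, with the only real work being a careful matching of hypotheses so that each direction is legitimately covered. I would open the proof by fixing the weak solution $u(x,t)$ with its single isolated shock $S(t)$ on $[t_0,\infty)$, and recording once and for all that $u_{max}=u(s_0(t))$ and $u_{min}=u(s_1(t))$ whenever we have an S-curve joining the left and right states (this is forced by the orientation conditions $u(s_0)>u(s^*)>u(s_1)$ in Definition~\ref{EAC} together with the fact that, for the entropy shock of a uniformly convex flux, the left state exceeds the right state). With that dictionary in place the Rankine-Hugoniot speed $\tfrac{F(u_{max})-F(u_{min})}{u_{max}-u_{min}}$ and the quantity $\tfrac{F(u(s_0(t)))-F(u(s_1(t)))}{u(s_0(t))-u(s_1(t)))}$ appearing in Lemmas~\ref{lem1} and~\ref{lem2} are literally the same number, which is what makes the equivalence go through.

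For the ``if'' direction I would assume the three differentiable functions $s_0(t),s^*(t),s_1(t)$ are given, making $(x(s,t),u(s))$ an S-curve with $A_{Dif}(s_0(t),s^*(t),s_1(t),t)=0$ for all $t\geq t_0$, and matching $u_L$ on $x<S(t)$ and $u_R$ on $x>S(t)$. The vanishing of $A_{Dif}$ together with $x(s_0(t),t)=x(s^*(t),t)=x(s_1(t),t)$ says precisely that the common value $x(s_0(t),t)$ is the vertical equal-area line, so by Lemma~\ref{lem1} it moves at speed $\tfrac{F(u(s_0(t)))-F(u(s_1(t)))}{u(s_0(t))-u(s_1(t)))}$; since the curve agrees with $u_L,u_R$ off the shock, $x(s_0(t),t)=S(t)$, and by the dictionary above this speed equals $\tfrac{F(u_{max})-F(u_{min})}{u_{max}-u_{min}}$, giving $\dot S(t)$ as claimed. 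Here one should note that Lemma~\ref{lem1} is stated for $t>t_s$; I would simply take $t_0>t_s$ (the shock has already formed and overturned) so the hypothesis applies verbatim, or remark that the argument is local in $t$ and only uses differentiability on the relevant interval.

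For the ``only if'' direction I would assume $\dot S(t)=\tfrac{F(u_{max})-F(u_{min})}{u_{max}-u_{min}}$ and invoke Lemma~\ref{lem2}: its hypothesis is exactly a weak solution of the form~\eqref{Weak1} with smooth $u_L,u_R$ and a single shock moving at Rankine-Hugoniot speed, and its conclusion hands us $C^1$ functions $s_0(t),s^*(t),s_1(t)$ such that any smooth overturned S-curve joining $u_L$ to $u_R$ with zero signed area at $t_0$ keeps $A_{Dif}(s_0(t),s^*(t),s_1(t),t)=0$ for all $t\geq t_0$; choosing such an S-curve explicitly (e.g.\ the parametric cubic construction already mentioned in the proof of Lemma~\ref{lem2}) produces the required object, and the matching with $u_L,u_R$ off the shock is built into that construction. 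The main obstacle, and the only place where care is genuinely needed, is the hypothesis alignment between the lemmas and the theorem: Lemma~\ref{lem1} presupposes that an S-curve with the stated $C^1$ structure and zero signed area \emph{exists for all} $t>t_s$, whereas in the ``only if'' direction existence is exactly what we must produce, so the logical flow has to route through Lemma~\ref{lem2} for existence and only then appeal (implicitly) to the speed computation; conversely, in the ``if'' direction we must be sure that the given S-curve with vanishing $A_{Dif}$ genuinely satisfies every clause of Definition~\ref{EAC} (three transversal intersections with the correct signs of $x_s$, no self-crossing) uniformly in $t$, which is guaranteed by the Remark following Definition~\ref{EAC} (uniform convexity of $F$ plus isolation of the shock preserves the S-shape) but should be cited. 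Once these compatibility points are pinned down, both implications are immediate and the proof is short.
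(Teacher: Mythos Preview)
Your proposal is correct and follows exactly the paper's own route: the paper's proof is the single line ``$\Rightarrow$ follows from Lemma~\ref{lem2} and $\Leftarrow$ follows from Lemma~\ref{lem1},'' which is precisely the decomposition you describe. Your additional care in aligning hypotheses (the $u_{max}/u_{min}$ dictionary, the S-curve persistence via the Remark, and the existence-vs-speed logical routing) is sound elaboration of what the paper leaves implicit.
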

\begin{proof}
$\Rightarrow$ follows from Lemma (\ref{lem2}) and $\Leftarrow$ follows from Lemma (\ref{lem1}).
\end{proof}
 \begin{rem}
 We note that we did not need to specify details about the portion of $(x(s,t),u(s))$ that was overturned, other than it being an S-curve. Therefore this result holds for any S-curve which satisfies the required regularity. This is advantageous numerically as this enables us to write a non-smooth Riemann problem into overturned S-curve, effectively turning a non-smooth problem into an equivalent smooth problem. Additionally, this provides flexibility from a particle management perspective, as we can keep the overturned region small by projecting down to a small S-curve when needed.
 \end{rem}
 
\section{The Generalized Equal Area Principle}\label{GEAP}

In this section we relax the requirement that the overturned portion of the curve is an S-curve. As mentioned above, when solving (\ref{PDE}) with uniformly convex flux function $F$, any isolated shock will result in an S-curve under the flow (\ref{flow}). This, however, is not the case when shocks collide.\\

Consider the initial condition illustrated in the left panel of Figure \ref{MultiShocks}. Until the two shocks collide, they move independently as S-curves, with their equal area projection agreeing with the Rankine-Hugoniot condition as shown in section \ref{global}.

\begin{figure}[!ht]
\begin{center}
\includegraphics[width=60mm,height=20mm]{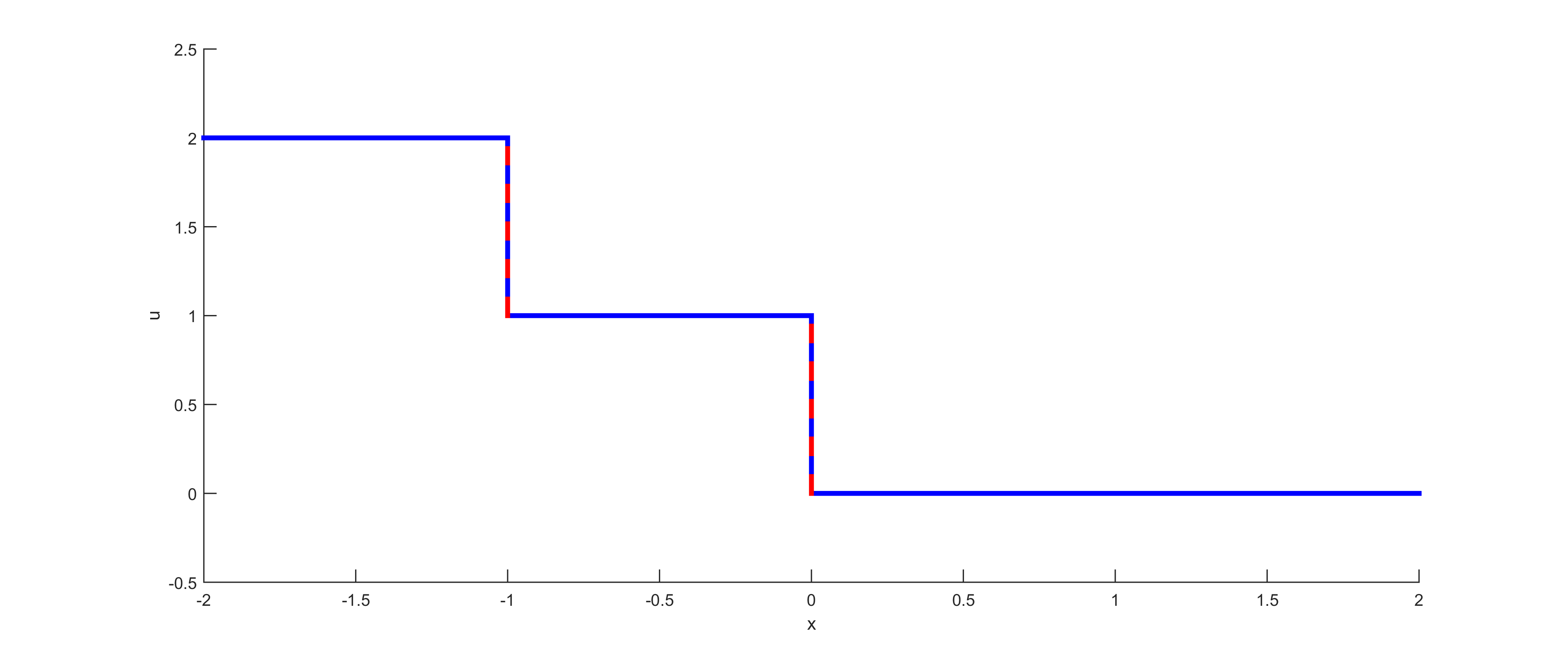}
\includegraphics[width=60mm,height=20mm]{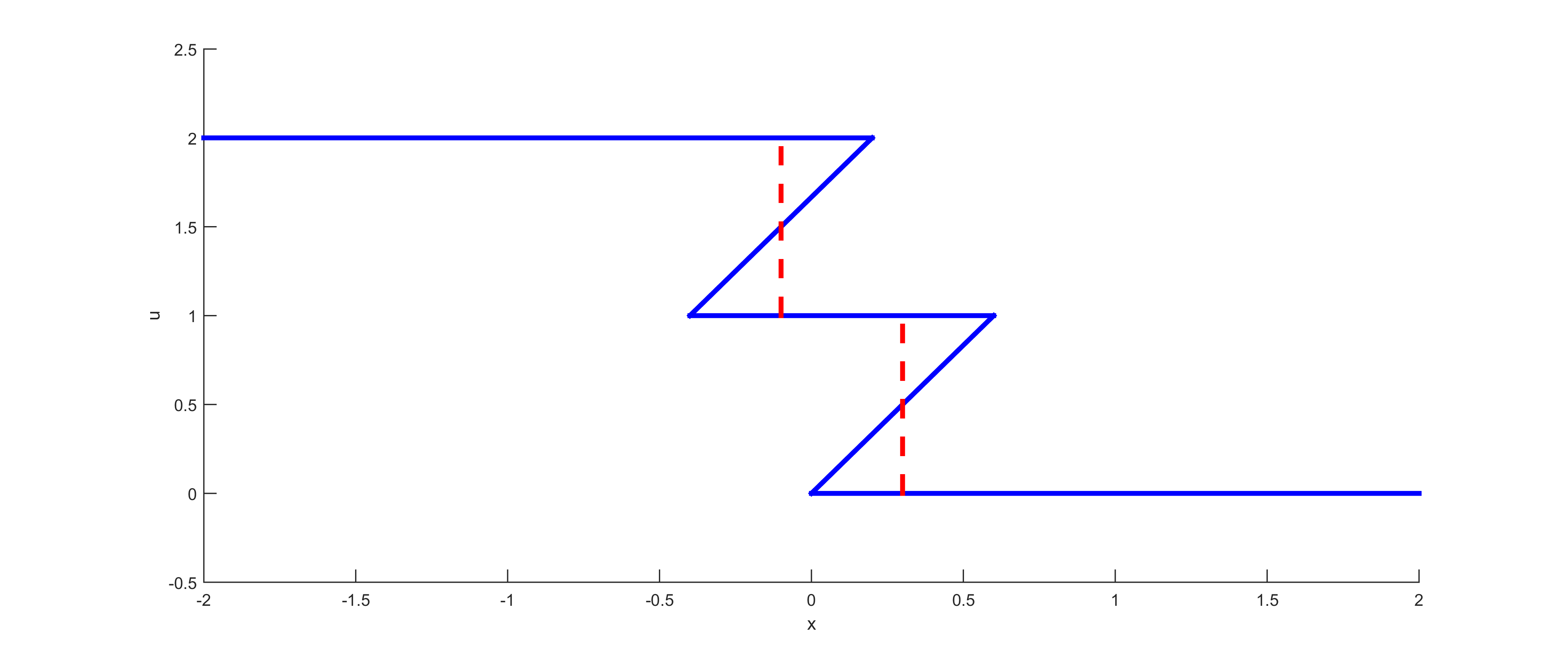}
\end{center}
\caption{Multiple Shocks with left panel showing $t=t_0$ and right panel $0<t<t^*$}
\label{MultiShocks}
\end{figure}

These shocks will propagate until a time $t^*$ when they collide, becoming one single shock and moving at a new speed given by the Rankine-Hugoniot condition. As illustrated in Figure \ref{MultiShocks2}, once collided the overturned region is no longer an S-curve, in fact, the shape will change depending on the number of collisions and the duration of the shock. Our main result, Theorem \ref{GET}, does not apply for $t>t^*$ as it can only be applied to the evolution of S-curves. We therefore need a theorem for more general multivalued regions. We call these equal area curves which we define below.

\begin{dfn}\label{EAC2}
Consider the set of parameter pairs defined by
\begin{equation*}
\mathcal{S}(t)=\{(s_0,s_1) \in \mathbb{R}^2 \big| x(s_0,t)=x(s_1,t),   \, \,  \text{such that} \int_{s_0}^{s_1}x_s(s,t)u(s)\text{ds}=0 \}.
\end{equation*}
In addition, we require that each pair in $\mathcal{S}(t)$ satisfies $u(s_0)>u(s_1)$, along with $\displaystyle \lim_{s\rightarrow s_0^{-}}x_s(s,t)>0$, and $\displaystyle \lim_{s\rightarrow s_1^{+}}x_s(s,t)>0$. If between each pair $(s_0,s_1) \, \in \mathcal{S}(t)$ we replace $(x(s,t),u(s))$ by a vertical line and as a result obtain a piecewise $C^1$ function with jump discontinuities, then $(x(s,t),u(s))$ is said to be an equal area curve. 
\end{dfn}

\begin{figure}[!ht]
\begin{center}
\includegraphics[width=60mm,height=20mm]{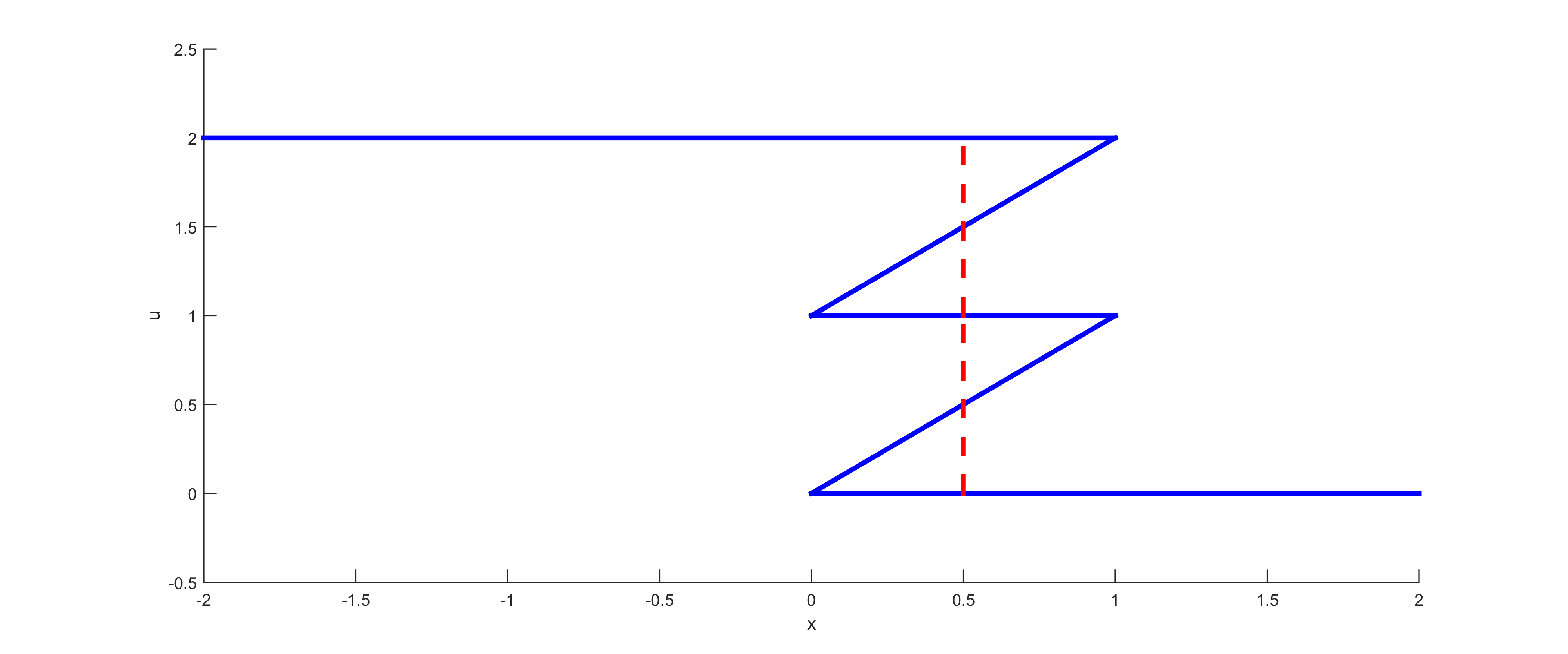}
\includegraphics[width=60mm,height=20mm]{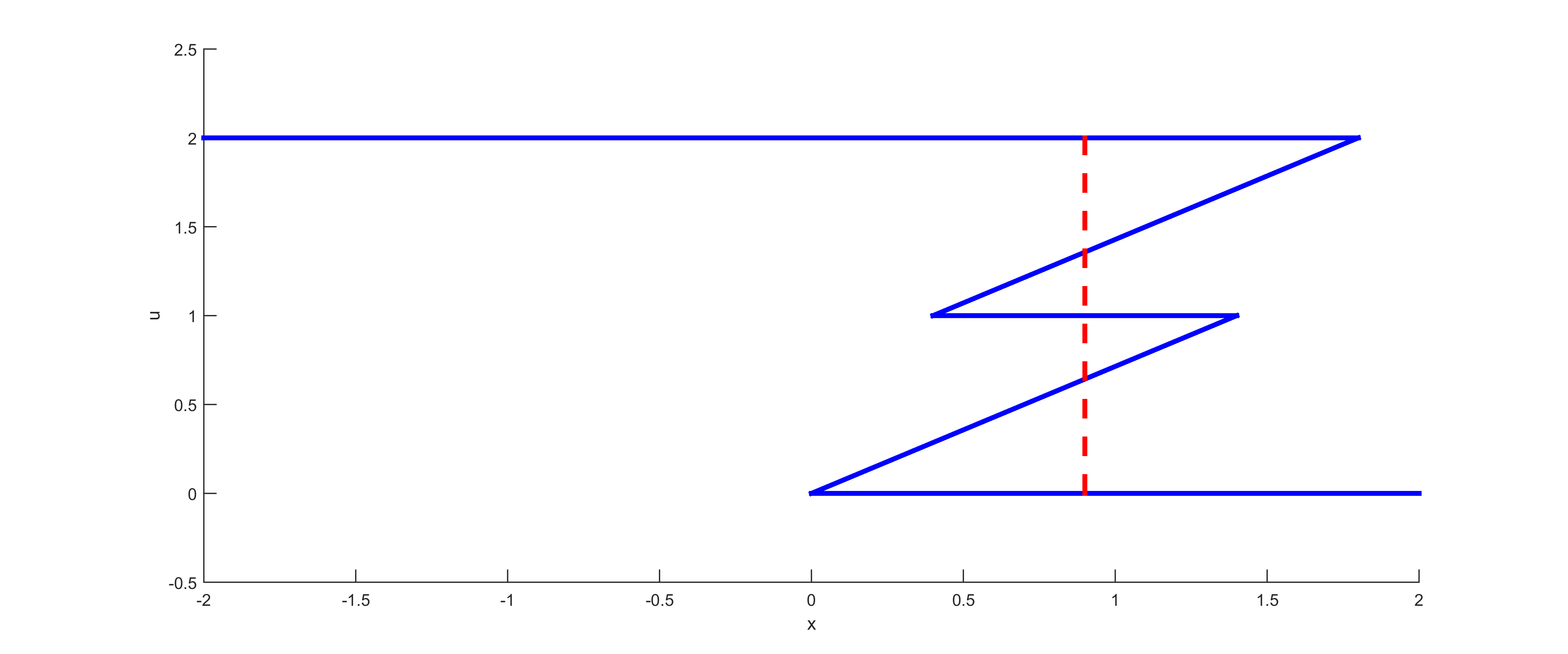}
\end{center}
\caption{Left panel shows $t=t*$ and right panel $t>t^*$}
\label{MultiShocks2}
\end{figure}

In order to prove that the equal area principle holds when shocks collide, we must first prove that the equal area principle holds for all equal area curves. Consider the following theorem.

\newpage 
\begin{thm}\label{EAPP} Let $(x(s,t),u(s))$ be a curve parametrized by $s$ defined by the flow $(x(s,t),u(s))=(s+F'(u(s))t,u(s))$. Suppose after some time $t$ the curve becomes multivalued, with average horizontal area about the multivalued region located at $A(t)$. Suppose that $A(t)$ intersects the weak solution obtained by the Rankine-Hugoniot condition along the top portion at $(x(s_0(t),t),u(s_0(t)))$, and along the bottom at $(x(s_1(t),t),u(s_1(t)))$. Then, 
\begin{equation}
\displaystyle A'(t)=\frac{F(u(s_0(t))-F(u(s_1(t))}{u(s_0(t))-u(s_1(t))},
\end{equation}
the same speed as given by the Rankine-Hugoniot condition.\end{thm}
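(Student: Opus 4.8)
The plan is to mimic the structure of Lemma \ref{lem1}, but to replace the single signed-area functional $A_{Dif}$ of an S-curve with the aggregate signed area over \emph{all} the multivalued subregions that the vertical line $A(t)$ carves out of the overturned curve. By Definition \ref{EAC2}, when $(x(s,t),u(s))$ is an equal area curve the set $\mathcal{S}(t)$ records all parameter pairs $(s_0,s_1)$ with $x(s_0,t)=x(s_1,t)$ bounding a subregion of zero horizontal area; the hypothesis of the theorem is that $A(t)$ is exactly the common horizontal position so that $s_0(t)$ is the topmost and $s_1(t)$ the bottommost intersection, with a finite collection of intermediate crossings $s_0(t)=\sigma_0(t)<\sigma_1(t)<\cdots<\sigma_k(t)=s_1(t)$ all satisfying $x(\sigma_j(t),t)=A(t)$. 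First I would write the total signed area as a telescoping sum $\sum_{j} (-1)^{j}\,\mathcal{A}_j(t)$ where each $\mathcal{A}_j$ has the same Green's-theorem form as in \eqref{AD1}–\eqref{AD2}, namely $\tfrac12\int_{\sigma_{j-1}(t)}^{\sigma_j(t)}\bigl(x(s,t)u'(s)-x_s(s,t)u(s)\bigr)\,ds$ plus the boundary line term evaluated at $x=A(t)$. The equal area condition built into Definition \ref{EAC2} (integrate by parts: $\int x_s u\,ds = [xu] - \int x u'\,ds$, and $x$ is constant at the endpoints) forces this total to vanish identically in $t$.

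Next I would differentiate the identity $0 = \sum_j (-1)^j \mathcal{A}_j(t)$ in time. The key structural observation is that \emph{every} interior crossing $\sigma_j(t)$ appears as an upper endpoint of one $\mathcal{A}_j$ and a lower endpoint of the next, so when I apply the Leibniz rule the Fundamental-Theorem-of-Calculus boundary contributions from interior crossings cancel in pairs — exactly as the analogous cancellation happened in the proof of Lemma \ref{lem1}, and precisely because $x(\sigma_j(t),t)=A(t)$ is common to all of them. What survives is: (i) the FTC terms at the two genuine endpoints $s_0(t)$ and $s_1(t)$, of the form $x_s(s_0,t)s_0'(t)u(s_0) - x_s(s_1,t)s_1'(t)u(s_1)$; (ii) a term $A'(t)\bigl(u(s_0(t))-u(s_1(t))\bigr)$ coming from differentiating the boundary line term, since the total line contribution telescopes to depend only on the two end crossings; and (iii) the integral of $\partial_t\bigl(x(s,t)u'(s)-x_s(s,t)u(s)\bigr)$ over the union of subintervals, which — because the integrand is a total $s$-derivative plus a pointwise piece once one uses $\partial_t x = F'(u)$ and $\partial_t u = 0$, exactly the computation $B(t)$ of Lemma \ref{lem1} — collapses again to a telescoping sum whose interior terms cancel, leaving only $2\bigl(F(u(s_1))-F(u(s_0))\bigr)$ plus boundary pieces at $s_0,s_1$.

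Assembling (i)–(iii) and using $\dfrac{d}{dt}x(s_0(t),t) = x_s(s_0,t)s_0'(t) + F'(u(s_0(t))) = A'(t)$ (and likewise at $s_1$, so that $x_s(s_0,t)s_0'(t)u(s_0) - x_s(s_1,t)s_1'(t)u(s_1)$ recombines with the $F'(u)u$ terms), one arrives at $2A'(t)\bigl(u(s_0)-u(s_1)\bigr) = F'(u(s_0))u(s_0) - F'(u(s_1))u(s_1) + 2\bigl(F(u(s_1))-F(u(s_0))\bigr) + [\text{a bracket that cancels}]$, which after dividing by $2(u(s_0)-u(s_1))\neq0$ gives $A'(t) = \dfrac{F(u(s_0(t)))-F(u(s_1(t)))}{u(s_0(t))-u(s_1(t))}$, as claimed. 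The regularity needed ($s_0,s_1,\sigma_j$ all $C^1$, finitely many crossings, no loss of differentiability except at collision times) is supplied by Theorem 4.1 of \cite{Daf} together with the definition of equal area curve.

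The main obstacle I anticipate is \textbf{bookkeeping the intermediate crossings honestly}: one must argue that for $t$ away from collision instants there are only finitely many $\sigma_j(t)$, that they are each $C^1$ (which requires a transversality/implicit-function argument using that the distinct branches cross $A(t)$ with nonzero slope — the $\lim x_s>0$ and $x_s<0$ sign conditions in Definitions \ref{EAC} and \ref{EAC2} are there precisely for this), and that the alternating-sign accounting of the subregions is consistent with "replacing each zero-area pair by a vertical line yields a genuine function," as Definition \ref{EAC2} demands. Once the combinatorial/topological picture is pinned down, the analytic core is a verbatim repetition of the Lemma \ref{lem1} computation applied termwise, with all interior contributions telescoping away.
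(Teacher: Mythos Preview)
Your plan would work, but it is more elaborate than the paper's argument and the extra machinery is exactly what the paper chooses to avoid. You propose to slice the multivalued region at all intermediate crossings $\sigma_0<\cdots<\sigma_k$, apply Green's theorem to each sub-loop, sum with alternating signs, differentiate, and watch the interior boundary terms telescope. The paper instead remarks explicitly that Green's theorem is ill-suited here because ``the number of closed curves is unknown'' and varies with $t$, and works directly with the single signed parametric area
\[
\int_{s_0(t)}^{s_1(t)} u(s)\,x_s(s,t)\,ds,
\]
which already computes the correct alternating sum of subregion areas without ever naming the $\sigma_j$. Differentiating this one integral via Leibniz, using $\partial_t x = F'(u)$, the endpoint relation $x_s s' = \tfrac{d}{dt}x - F'(u)$, and one integration by parts, the paper obtains $\tfrac{d}{dt}\int_{s_0}^{s_1} u\,x_s\,ds = (u(s_1)-u(s_0))\tfrac{d}{dt}x(s_0,t) + F(u(s_0))-F(u(s_1))$, and the conclusion is immediate. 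Your telescoping, once executed, collapses to precisely this identity---the intermediate $\sigma_j$ contribute nothing to the final expression---so the decomposition is dispensable. What the paper's route buys is that all of the bookkeeping you correctly flagged as the main obstacle (finitely many crossings, their $C^1$ dependence on $t$, transversality, sign consistency across subregions) simply never arises; what your route buys is a clearer geometric picture of \emph{why} the single line integral $\int u\,dx$ is the right object, since you are effectively re-deriving that fact from Green's theorem piece by piece.
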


\begin{proof}
Since we have no information about the shape of the multivalued portion of the curve $(x(s,t),u(s))$, we are unable to utilize Green's theorem, as the number of closed curves is unknown. One example would be a curve as in Figure \ref{Multi}. Notice that as this curve evolves its shape will change, rendering our Green's theorem approach obsolete. We therefore work directly with the parametric area given by 
\begin{equation}
A(t)=\int_{s_0(t)}^{s_1(t)}{u(s(t))x_s(s(t),t) ds(t)}\label{Area}.
\end{equation}

\begin{figure}[!ht]
\begin{center}
\includegraphics[width=60mm,height=20mm]{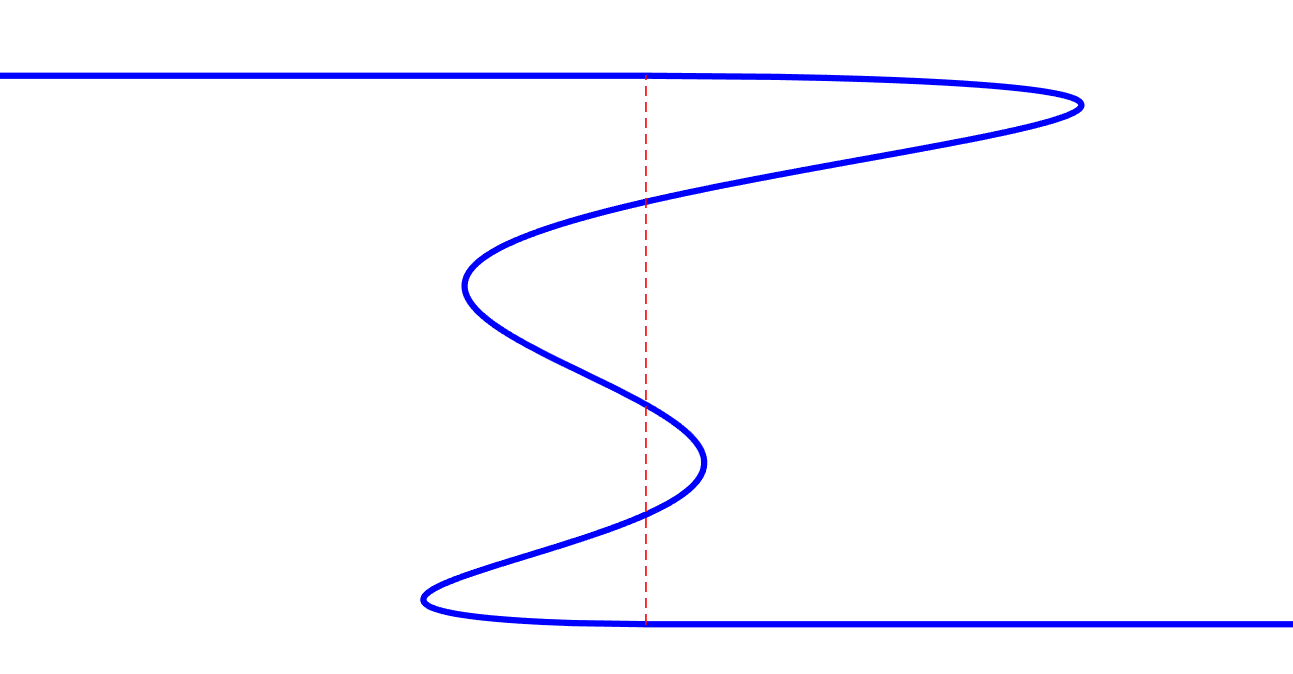}
\end{center}
\caption{ Example multivalued curve }
\label{Multi}
\end{figure}

Equation (\ref{Area}) tells us
\begin{align}
A'(t)=&\frac{d}{dt}\int_{s_0(t)}^{s_1(t)}{u(s(t))\partial_{s(t)}x(s(t),t) ds(t)} \nonumber\\
=&u(s(t))x_s(s(t),t)s'(t)\Big|_{s_0(t)}^{s_1(t)}+\int_{s_0(t)}^{s_1(t)}{\frac{\partial}{\partial t}u(s(t))x_s(s(t),t) ds(t)}\label{Dt}.
\end{align}
We simplify the two terms in equation (\ref{Dt}) separately. We begin with the first term.

\begin{equation}
u(s(t))x_s(s(t),t)s'(t)\Big|_{s_0(t)}^{s_1(t)}=u(s_1(t))\partial_{s(t)}x(s_1(t),t)s_1'(t)-u(s_0(t))x_s(s_0(t),t)s_0'(t)\label{eval}
\end{equation}

Using $x(s(t),t)=s(t)+F'(u(s(t)))t$, we obtain the relation
\begin{equation}
x_s(s(t),t)s'(t)=\frac{d}{dt}x(s(t),t)-F'(u(s(t))).\label{rel}
\end{equation}

Plugging in (\ref{rel}) into (\ref{eval}), and using $x(s_0(t),t)=x(s_1(t),t)$, we obtain

\begin{align}
u(s(t))x_s(s(t),t)s'(t)\Big|_{s_0(t)}^{s_1(t)}&=\frac{d}{dt}x(s_0(t),t)(u(s_1(t))-u(s_0(t)))\nonumber\\
&+F'(u(s_0(t)))u(s_0(t))-F'(u(s_1(t)))u(s_1(t))\label{term1}.
\end{align}

Turning to the second term in equation (\ref{Dt}), we obtain the following simplification
\begin{align*}
\int_{s_0(t)}^{s_1(t)}{\frac{\partial}{\partial t}u(s(t))x_s(s(t),t) ds(t)}&=\int_{s_0(t)}^{s_1(t)}{\frac{\partial}{\partial t}u(s(t))(1+F''(u(s(t)))u'(s(t))t) ds(t)}\\
&=\int_{s_0(t)}^{s_1(t)}{F''(u(s(t)))u'(s(t))u(s(t) ds(t)}.
\end{align*}
After applying a substitution and integration by parts we obtain
\begin{align}
\int_{s_0(t)}^{s_1(t)}{\frac{\partial}{\partial t}u(s(t))\partial_{s(t)}x(s(t),t) ds(t)}&=F'(u(s(t))u(s(t))\Big|_{s_0(t)}^{s_1(t)}-\int_{s_0(t)}^{s_1(t)}{F'(u(s(t))ds(t).}\label{term2}
\end{align}

Plugging (\ref{term1}) and (\ref{term2}) into (\ref{Dt}) yields
\begin{align}
A'(t)&=\frac{d}{dt}x(s_0(t),t)(u(s_1(t))-u(s_0(t)))
+F'(u(s_0(t)))u(s_0(t))\nonumber\\
&-F'(u(s_1(t)))u(s_1(t))+F'(u(s_1(t)))u(s_1(t))-F'(u(s_0(t)))u(s_0(t))\\
&+F(u(s_0(t)))-F(u(s_1(t))).\nonumber\\
&=\frac{d}{dt}x(s_0(t),t)(u(s_1(t))-u(s_0(t)))+F(u(s_0(t)))-F(u(s_1(t))).\label{result}
\end{align} 

From here we see that $A'(t)=0$ if and only if\\ $\displaystyle \frac{d}{dt}x(s_0(t),t)=\frac{F(u(s_0(t))-F(u(s_1(t))}{u(s_0(t))-u(s_1(t))}$ under the given assumptions, which completes the proof.
\end{proof}

%\begin{rem}
%We note that as in Theorem 1 we did not need to specify any details about the curve $(x(s,t),u(s))$ other than it being an equal area curve. 
%\end{rem}

Now that we have the equal area principle for any overturned equal area curve, we can prove the following Corollary on shock collisions.

\begin{cor} \label{ShockInt} Suppose $(x(s,t),u(s))$ is an equal area curve associated with the entropy satisfying weak solution of (\ref{PDE}). In addition, suppose that $(x(s,t),u(s))$ has exactly two shocks which collide at $t=t^*$. Then, at $t=t^*$, $(x(s,t^*),u(s))$ is an equal area curve with exactly one shock at the point of collision which propagates at Rankine-Hugoniot speed.
\end{cor}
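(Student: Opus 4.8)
The plan is to reduce the statement to tools already in hand --- the Global Equivalence Theorem \ref{GET} (equivalently Lemma \ref{lem1}) applied to each of the two shocks while they are isolated, Dafermos' regularity \cite{Daf}, and Theorem \ref{EAPP} for the post-collision configuration --- so that the only genuinely new work is a careful description of the parametric curve at the instant of collision. First I would fix notation for $t<t^{*}$: by uniform convexity each isolated shock produces an overturned S-curve (Definition \ref{EAC}); write the three S-curve parameters of the left shock $S_{1}(t)$ as $a_{0}(t)<a^{*}(t)<a_{1}(t)$ and of the right shock $S_{2}(t)$ as $b_{0}(t)<b^{*}(t)<b_{1}(t)$, ordered along the parametrization so that $a_{0}<a^{*}<a_{1}<b_{0}<b^{*}<b_{1}$. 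Theorem 4.1 of \cite{Daf} makes each $S_{i}$ differentiable while isolated, and the implicit function theorem applied to $x(a_{i}(t),t)=S_{1}(t)$ (and similarly for the $b$'s) makes the parameter functions $C^{1}$ on $[t_{0},t^{*})$. By Lemma \ref{lem1} each shock carries zero signed area, which in the form used in Definition \ref{EAC2} reads $\int_{a_{0}(t)}^{a_{1}(t)}x_{s}(s,t)u(s)\,ds=0$ and $\int_{b_{0}(t)}^{b_{1}(t)}x_{s}(s,t)u(s)\,ds=0$ (these coincide with $A_{Dif}=0$ because the three S-curve intersections share a common $x$-value). Between the shocks the curve is the smooth solution strand on the parameter interval $(a_{1}(t),b_{0}(t))$, where $x_{s}>0$ and whose $x$-image is exactly $[S_{1}(t),S_{2}(t)]$.

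Next I would pass to $t=t^{*}$. Since the shocks collide, $S_{1}(t)\uparrow x^{*}$ and $S_{2}(t)\downarrow x^{*}$; the intermediate strand absorbs characteristics from both sides and shrinks to a single characteristic, so $u(a_{1}(t^{*}))=u(b_{0}(t^{*}))$, and because $x_{s}>0$ on that strand forces $b_{0}(t)-a_{1}(t)\to 0$ along with $S_{2}(t)-S_{1}(t)\to 0$, we get $a_{1}(t^{*})=b_{0}(t^{*})=:m$. (More robustly, the intermediate parametric area equals $\int_{S_{1}(t)}^{S_{2}(t)}u(x,t)\,dx$, a bounded integrand over a vanishing interval, hence $\to 0$; the degenerate case in which the strand itself breaks precisely at $t^{*}$ is excluded by the hypothesis of exactly two colliding shocks.) Letting $t\to t^{*-}$ in the two zero-area identities, using continuity of the curve and of the parameter functions, yields $\int_{a_{0}(t^{*})}^{a_{1}(t^{*})}x_{s}u\,ds=\int_{b_{0}(t^{*})}^{b_{1}(t^{*})}x_{s}u\,ds=0$, and therefore
\[
\int_{a_{0}(t^{*})}^{b_{1}(t^{*})}x_{s}(s,t^{*})u(s)\,ds=0,\qquad x(a_{0}(t^{*}),t^{*})=x^{*}=x(b_{1}(t^{*}),t^{*}).
\]

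Then I would verify that the pair $(a_{0}(t^{*}),b_{1}(t^{*}))$ meets every clause of Definition \ref{EAC2}, so that $(x(s,t^{*}),u(s))$ is an equal area curve whose single multivalued region collapses to one jump at $x^{*}$. The equal-$x$ and zero-area conditions were just checked; $\lim_{s\to a_{0}^{-}}x_{s}>0$ and $\lim_{s\to b_{1}^{+}}x_{s}>0$ hold because $s<a_{0}$ and $s>b_{1}$ parametrize the unbroken left and right solution strands; the entropy inequality $u(a_{0}(t^{*}))>u(b_{1}(t^{*}))$, i.e.\ $u_{L}>u_{R}$ for the merged jump, follows from convexity, since $u_{L}\le u_{R}$ would give $F'(u_{L})\le F'(u_{R})$ and hence $\dot S_{1}<F'(u_{L})\le F'(u_{R})<\dot S_{2}$ near $t^{*}$, contradicting $S_{2}-S_{1}\downarrow 0$; finally, replacing $(x(s,t^{*}),u(s))$ between the pairs of $\mathcal S(t^{*})$ near $x^{*}$ --- which the sign and area clauses force to be exactly $(a_{0},a_{1})$, $(b_{0},b_{1})$ and the subsuming $(a_{0},b_{1})$, any candidate such as $(a_{0},a^{*})$ being killed by the area clause --- by vertical lines at $x^{*}$ returns the left strand for $x<x^{*}$, a single jump from $u_{L}$ down to $u_{R}$, and the right strand for $x>x^{*}$: a piecewise $C^{1}$ function with one jump discontinuity. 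Thus $(x(s,t^{*}),u(s))$ is an equal area curve with exactly one shock, at the collision point. For the speed, the merged shock is isolated for $t>t^{*}$, so Theorem \ref{EAPP} (via equation (\ref{result})) keeps the signed area zero and identifies the equal-area line with the entropy solution's shock $S(t)$, which therefore moves at $\dot S=\frac{F(u_{L})-F(u_{R})}{u_{L}-u_{R}}$.

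I expect the main obstacle to be the passage to $t^{*}$ in the second displayed step: rigorously pinning down the limiting geometry --- that the intermediate strand collapses so the two separate zero-area conditions add cleanly across the merge, and that no extraneous pair enters $\mathcal S(t^{*})$. Once the configuration at $t^{*}$ is understood, the remainder is continuity of integrals plus direct appeals to Theorems \ref{GET} and \ref{EAPP}.
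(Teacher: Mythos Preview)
Your proposal is correct and follows the same skeleton as the paper: track the S-curve parameter triples for each shock, show that at $t^{*}$ the bottom of the first coincides with the top of the second so the two zero-area regions stack into a single zero-area multivalued region, then invoke Theorem~\ref{EAPP} for $t>t^{*}$. The one difference is that the paper sidesteps what you flag as the main obstacle (the $t\to t^{*-}$ limit) by a chopping device: it restricts the initial data separately to $s\in[0,s^{*}]$ and $s\in[s^{*},1]$, where $s^{*}=s_{11}(t^{*})=s_{20}(t^{*})$, so each piece carries exactly one isolated shock and hence remains an S-curve up to and including $t^{*}$, making the stacking immediate without a delicate limiting argument; your more careful limit-based version, together with the explicit verification of each clause of Definition~\ref{EAC2}, supplies detail the paper's terse sketch omits.
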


 \begin{proof}
As in Theorem 1, parametrize the first shock with top $s_{10}(t)$ and bottom $s_{11}(t)$ and the second with top $s_{20}(t)$ and bottom $s_{21}(t)$. Both of these are $C^1$ functions on $[0,t^*]$. Note that $s_{11}(t^*)=s_{20}(t^*)=s^*\in(0,1)$. Now, chopping the initial data $(s,g(s))$ from $s\in[0,1]$ down to $s\in[0,s^*]$, we have that $(x(s,t),u(s))$ has exactly one shock. This results in an S-curve up to $t=t^*$. Repeat this argument on the second portion $s\in[s^*,1]$. This results in stacked S-curves at $t=t^*$, with $x(s_{10}(t^*),t^*)=x(s_{11}(t^*),t^*)=x(s_{20}(t^*),t^*)=x(s_{21}(t^*),t^*)$, both having equal area on the right and left. The resulting curve is therefore an equal area curve. Thus, for $t>t^*$, Theorem 2 tells us that the equal area projection agrees with the Rankine-Hugoniot condition.
 \end{proof}

With proofs for Theorem \ref{GET} and Theorem \ref{EAPP} along with Corollary \ref{ShockInt}, we have that equation (\ref{flow}) can always be used to obtain weak solutions of the Cauchy problem (\ref{PDE}) using the equal area principle. Therefore, if we are able to obtain an accurate parametric polynomial representation of (\ref{flow}), then upon an appropriate equal area projection we can obtain an accurate representation of the desired weak solution. In the next section we show how the ingredients required to generate such parametric polynomial interpolants are readily available. 

\section{Numerical Approach}\label{Numerics}
 %The main objective of this paper was to develop a rigorous framework for numerical methods which utilize the equal area principle. In particular, we set out to show that we can obtain weak solution of (\ref{PDE}) by applying an equal area projection of the parametric curve (\ref{flow}). In this section we show how this enables us to solve for weak solutions of (\ref{PDE}) by working directly with a parametric polynomial representation of (\ref{flow}).  Although specifics are left for future work, we show how all the tools required to build such methods are readily available. 

We begin by recalling the implications of the above analysis. We have shown, for any problem within the scope of (\ref{PDE}), that the weak solutions can be obtained from the parametric curve $(x(s,t),u(s))=\left(s + F'(g(s))t,g(s)\right)$, by employing the equal area principle. That is, if we have a parametric polynomial approximation of (\ref{flow}), then, as parametric polynomials can be integrated exactly, we can obtain an accurate representation of the weak solution through an area preserving projection. We now provide a brief discussion of how one uses (\ref{flow}) to create the required parametric polynomial interpolants.

Suppose we partition the initial curve $(x,g(x))$ into $n$ subintervals at points $x_i$, for $i=0,..,n$.  To obtain an accurate representation of the parametric curve between $x_i$ and $x_{i+1}$ at time $t$ one requires endpoint values, derivative information, and in this case, the parametric area. We show how each of these ingredients are easily obtained from (\ref{PDE}), without any accumulation of error. 

The endpoints of each interpolating parametric polynomial are easily obtained by evaluating the flow at time t. Specifically, the curve initially interpolating from $(x_i,g(x_i))$ to $(x_{i+1},g(x_{i+1})$, would, after time $t$, interpolate from $\left(x_{i}+F(g(x_{i})t,g(x_{i})\right)$ to $\left(x_{i+1}+F(g(x_{i+1})t,g(x_{i+1})\right)$ , which is a simple evaluation. Tangent information at time $t$ at each node $\left(x(x_i,t),g(x_i)\right)$ is obtained by evaluating the parametric function $\left(1+F''(g(x_i))g'(x_i)t,g'(x_i)\right)$, with higher order information available if $F$ and $g$ possess sufficient regularity. And finally, the parametric area under the curve from $\left(x_{i}+F(g(x_{i})t,g(x_{i})\right)$ to $\left(x_{i+1}+F(g(x_{i+1})t,g(x_{i+1})\right)$ is given by 
\begin{align}
\int_{x_i}^{x_{i+1}}g(s)\left(1+F''(g(s))g'(s)t\right)ds&=\int_{x_i}^{x_{i+1}}g(s)ds+\int_{x_i}^{x_{i+1}}F''(g(s))g'(s)g(s)t ds,\nonumber\\
&=\int_{x_i}^{x_{i+1}}g(s)ds+t\int_{g(x_i)}^{g(x_{i+1})}F''(u)u du\nonumber\\
&=\int_{x_i}^{x_{i+1}}g(s)ds+t\left(F'(u)u-F(u)\right)\Big|_{g(x_i)}^{g(x_{i+1})}\label{Area11}
\end{align}
which can be calculated analytically, provided we can find an expression for $ \int{g(x) dx} $.  We note that only the second term depends on time, therefore in cases where we are unable to compute $\int{g(x)}dx$ exactly, a precise numerical integration can be used at $t=0$, then as time evolves we simply update the second term. Therefore, the quantities required to produce accurate interpolating polynomials are easily initialized and are inexpensive to update.

\begin{rem}
We will not discuss details of the parametric interpolation itself as it is not within the scope of the current paper. For example, one can match data from a function, matching tangents, area and function value, but the resulting parametric polynomial may be multivalued, an outcome that would cripple this approach. Therefore we leave this discussion for future work where the required precision and care can be taken.
\end{rem}

 We note that if such a scheme can be properly  implemented, then it will be a conservative scheme, as each parametric interpolant will exactly preserve the parametric area. We also note that in the case where (\ref{flow}) can be exactly represented in the chosen space of parametric polynomials, then this framework will produce the exact solution, up to machine error. This is the case in the example done below with Burgers' equation.

%\begin{rem}
%In the appendix we discuss how using the above Corollary %enables us to determine the shock position as a simple %polynomial root finding problem. 
%\end{rem}

\subsection{Numerical Results}
We now apply the framework presented above to the problem solved in section 2 of this paper,
\begin{equation*}
\begin{cases}
u_t+\left(\frac{u^2}{2}\right)_{x}=0,\quad \text{on $\mathbb{R}\times(0,\infty)$}\\       
u(x,0)=g(x)\quad \text{on $\mathbb{R}\times\{0\}$},
\end{cases}
\end{equation*}
where the initial condition $g(x)$ is defined by
\begin{equation*}g(x)=
\begin{cases}
0 \quad \text{for $x<0$}\\ 
x \quad \text{for $0\leq x \leq 1$}\\
0 \quad \text{for $x>1$}.
\end{cases}
\end{equation*}

For this example we only have to interpolate the line from $(0,0)$ to $(1,1)$, and the vertical line from $(1,0)$ to $(1,1)$, as $F'(0)=0$ when working with Burgers' equation. For this particular example we will obtain the desired accuracy using simple linear interpolation. We begin by noting that the first line segment at $t=0$ is exactly given by $P_1(s,0)=(s,s),$ for $s\in [0,1]$. Therefore, applying the flow (\ref{flow}) we obtain $P_1(s,t)=(s+st,s)$. Similarly, the vertical line is given by $P_2(s,0)=(1,s)$, for $s\in[0,1]$, which yields $P_2(s,t)=(1+st,s)$, for $s\in[0,1]$. Using the technique described in the appendix, we employ polynomial interpolation to $P_1(s,t)$ and $P_2(s,t)$. We derive the following set of conditions from $P_1(s,t)$ in order to generate the Hermite cubic $H_1(x,t)$ between $x=0$, given by $s=0$, and $x=1+t$, given by $s=1$

\begin{align*}
(0,H_1(0,t))&=P_1(0,t)=(0,0),\\ (1+t,H_1(1+t,t))&=P_1(1,t)=(1+t,1) \\
\frac{\partial}{\partial x}H_1(0,t)&=\frac{1}{1+t} , \, \, \, \frac{\partial}{\partial x}H_1(1+t,t)=\frac{1}{1+t}. 
\end{align*}
From here we obtain $H_1(x,t)=\frac{x}{1+t}$ and applying the same technique to $H_2(x,t)$ and obtain $H_2(x,t)=\frac{x-1}{t}$. Applying the root finding discussed in the appendix yields the quadratic
\begin{equation}
X^2-(1+t)=0
\end{equation}
whose roots for $X\in[0,1+t]$ yield the shock position at time $t$. A sample simulation is illustrated in Figure \ref{Simulation1} with error shown in Figure \ref{ErrorPlot}.

\begin{rem}
We note that when solving Burgers' equation, the space of parametric polynomials maps to itself under the flow (\ref{flow}). For example, suppose the initial condition $g(s)=as^2+bs+c$, then after time $t$ the flow yields the parametric curve $P(s,t)=(s+(g(s))t,g(s))=(ats^2+(bt+1)s+ct,g(s))$, which has remained in the space of parametric quadratics. Therefore, if we are able to exactly interpolate $(s,g(s))$, then we retain an exact representation of $(s+g(s)t,g(s))$ for all time. This explains how we are able to obtain machine precision in the example presented here. More care is required when solving conservation laws other than Burgers' equation for this reason.
\end{rem}

\begin{figure}[!ht]
\begin{center}
\includegraphics[width=63mm,height=30mm]{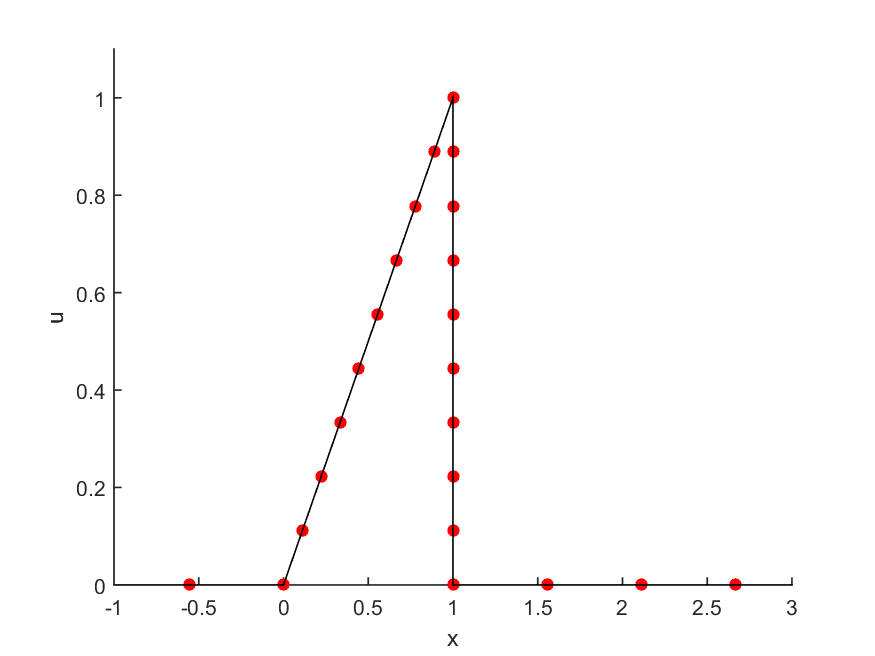}
\includegraphics[width=63mm,height=30mm]{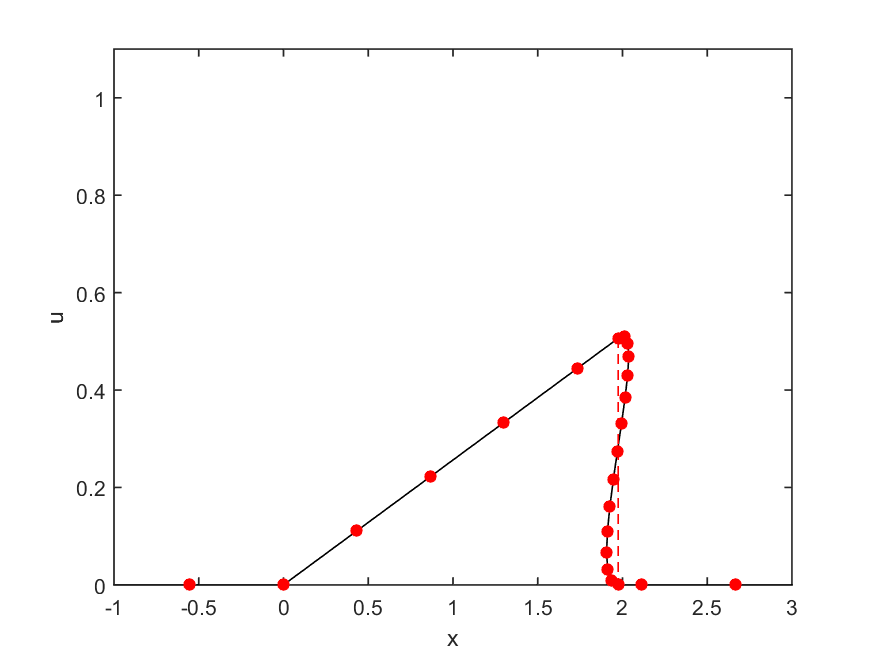}
\end{center}
\caption{Simulation of method at $t=0$ and $t=3$}
\label{Simulation1}
\end{figure}

In Figure \ref{ErrorPlot} we show the error in shock position after a fixed amount of time $t^*$ as we vary the time step $\Delta t$. We achieve machine precision as expected, with variances in error coming from round-off errors and machine errors in the root finding problem. Of course, we do not expect this precision for an arbitrary Cauchy problem of this type, but leave applications to other flux functions for future study.

\begin{figure}[!ht]
\begin{center}
\includegraphics[width=120mm,height=60mm]{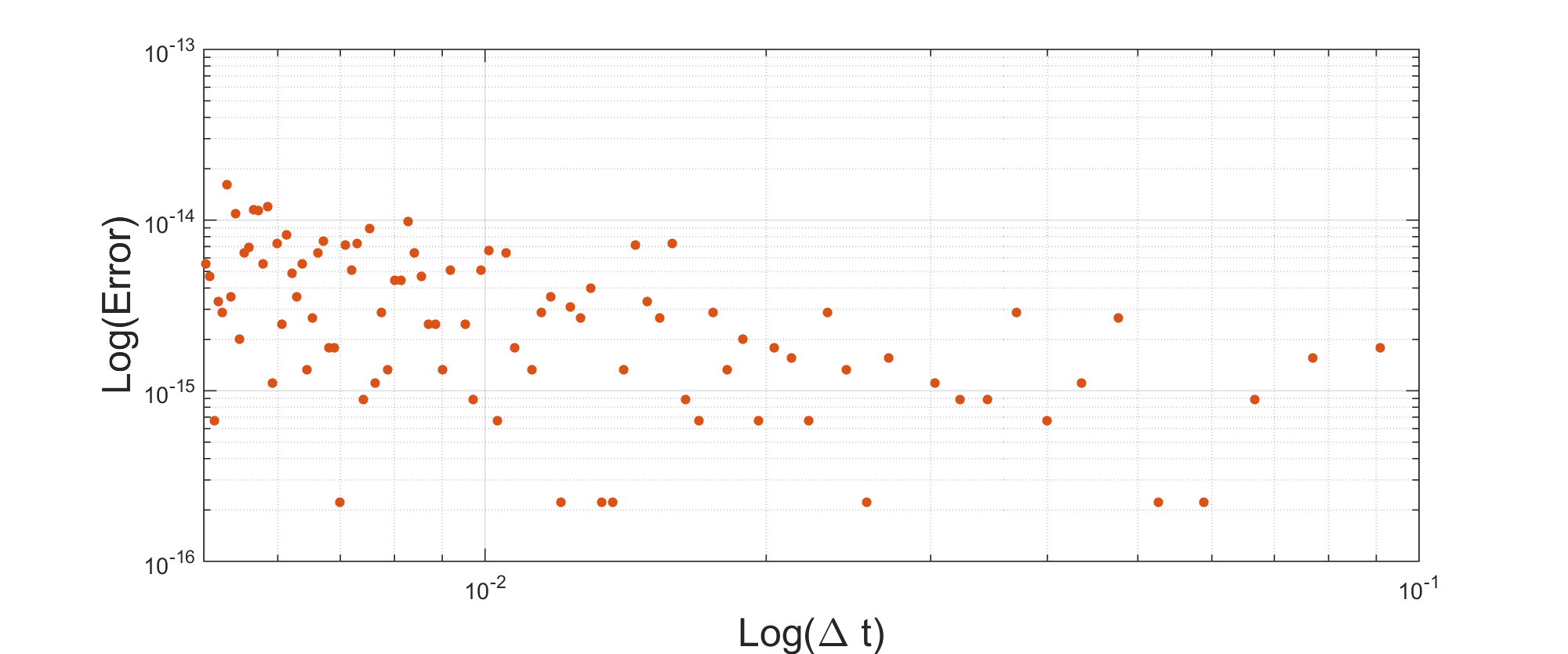}
\end{center}
\caption{Error in the shock position at $t=10$}
\label{ErrorPlot}
\end{figure}

We began with the above example because it avoids many of the subtleties of parametric interpolation which we are not discussing in this paper. We now present a more complex example and show how each of the quantities described above can be easily be computed.

Consider the Cauchy problem

\begin{equation}
\begin{cases}
u_t+\left(\frac{u^2}{2}\right)_{x}=0,\quad \text{on $\mathbb{R}\times(0,\infty)$}\\ 
\\      
u(x,0)=g(x)=\begin{cases}1+\arctan(-x)\quad \text{on $x\in [-10,10]\}$},\\
0\quad \text{on $x\in (-\infty,-10)\cup(10,\infty)$}.
\end{cases}
\end{cases}
\end{equation}

We begin by sampling $n$ points, $(x_0,u_0), \,(x_1,u_1), \dots, (x_{n-1},u_{n-1})$, from the initial curve, including both vertical lines and the curve $(x,g(x))$.  More specifically we have
\begin{align*}
&(x_i,u_i)=\left(-10,\frac{i}{k-1}(1+\arctan(-10))\right), \, \text{ $i=0,\dots, k-1$}\\
&(x_i,u_i)=\left(-10+20\frac{(i-k)}{m},1+\arctan\left(-10+20\frac{(i-k)}{m}\right)\right), \, \text{ $i=k,\dots,k+m$}\\
&(x_i,u_i)=\left(10,\frac{(n-1)-i}{(n-1)-(k+m+1)}(1+\arctan(10))\right), \, \text{ $i=k+m+1,\dots, n-1$}.
\end{align*}

Letting each point move under the characteristic flow for $\Delta t$ yields a new set of points $(x_i+u_i\Delta t,u_i)$, where $x_i$ and $u_i$ from each portion of the curve are given above. Our numerical solution is obtained by extracting data from the exact solution curve to construct parametric polynomials through our sample points. For $i=0,\dots,k-1$ we have the exact curve given by\\ $\left(-10+s(1+\arctan(-10)\Delta t,s(1+\arctan(-10)\right)$, for $s\in[0,1]$, which is simply a line. This is the same for the second vertical line. The main part of the curve has exact solution given by
\begin{equation}
\Phi_B(x,g(x),\Delta t)=\left(x+(1+\arctan\left(-x\right))\Delta t,1+\arctan\left(-x\right)\right), \quad \text{for $x\in[-10,10]$}
\end{equation}
For each pair of sample points, $(x_i + u_i\Delta t,u_i)$ and $(x_{i+1} + u_{i+1}\Delta t,u_{i+1})$, for $i=k,\dots,k+m-1$, we generate a parametric polynomial interpolant, $(X_{i}(s),U_{i}(s))$, for $s\in[0,1]$, matching the tangents given by,
\begin{align*}
(D_sX_{i}(0),D_sU_{i}(0))&=\left(1+\frac{\Delta t}{1+x_i^2},\frac{-1}{1+x_i^2}\right), \text{and}\\
(D_sX_{i}(1),D_sU_{i}(1))&=\left(1+\frac{\Delta t}{1+x_{i+1}^2},\frac{-1}{1+x_{i+1}^2}\right).
\end{align*} 
If higher order interpolation is desired, we can compute higher order derivatives using the same approach. As we want a conservative scheme, we require the interpolants to exactly preserve the parametric area. Therefore, for each $i=k,\dots,k+m-1$, we require
\begin{equation}
\int_0^1{U_{i}(s)D_sX_{i}(s) ds}=\int_{x_{i}}^{x_{i+1}}{(1+\arctan(-x))\left(1-\frac{\Delta t}{1+x^2}\right) dx},
\end{equation}
which can be simplified using equation (\ref{Area11}) and integrated exactly.
%partitioning the initial data into $n$ Hermite polynomials, which in this example we will assume to be fourth order. Therefore each interpolating polynomial will be of the form
%\begin{equation}
%H_i(s)=a_is^4+b_is^3+c_is^2+d_is+e_i, \quad \text{for $s\in[0,1]$},
%\end{equation} with $-10=s_1<s_2<\dots<s_{n-1}<s_n=10$. We obtain the correct choices for $a_i,b_i,c_i$ and $d_i$ by requiring that each polynomial matches function value, derivatives at each endpoint and area. We always compute the derivatives from the interior of each polynomial, for example, $\displaystyle H_1'(0)=\lim_{x\rightarrow-10^+}g'(x)$. The final step is to interpolate the vertical lines at $x=-10$ and $x=10$, meaning we take
%\begin{align}
%&(s,H_0(s))=(-10,(1+\arctan(-10)s), \quad \text{for $s\in[0,1]$, and}\\
%&(s,H_{n+1}(s))=(10,(1+\arctan(10)s), \quad \text{for $s\in[0,1]$.}\nonumber
%\end{align}
%We note that in order to obtain the desired convergence results as we increase the number of polynomial interpolants, the same refinement will need to be done along the lines of discontinuity. As we will see shortly however, this is not required when solving Burgers equation.

\section{Discussion}\label{Disc}
In this paper we set out to provide a rigorous framework for which new numerical methods can be derived to solve hyperbolic conservation laws.  In particular we wanted to prove that numerically we can find weak solution of (\ref{PDE}) by working entirely with a parametric polynomial representation of (\ref{flow}). To reach this goal, stronger analytical results for the equal area principle were required.  Using techniques similar to those found in \cite{Equal} and \cite{Whit}, we developed the basis for the global equivalence theorem, proving an equivalence between the Rankine-Hugoniot condition and the equal area principle for isolated shocks. To deal with the case when shocks interact, we required a more general result than global equivalence theorem, which we refer to as the generalized equal area principle. This result has many applications, one of which is the corollary presented in the appendix, proving that we can work directly with the shock line instead of the multivalued curve given by (\ref{flow}).  Most importantly, upon proving Theorem 2, we concluded that the flow (\ref{flow}) can be used to obtain weak solutions of (\ref{PDE}) by employing the equal area principle for any piecewise smooth initial condition $g(x)$. This yields the desired result that an accurate representation of (\ref{flow}) yields an accurate representation of the desired weak solution. Next we showed the ease at which we can compute the required quantities to produce an accurate parametric polynomial interpolation of (\ref{flow}).

  %This result, which we call the generalized equal area principle, enables the use of any such equal area curve to find the weak solution of the Cauchy problem (\ref{PDE}).    This is useful for numerics as it allowed us to prove thatIt is exactly this flexibility which delivers the proof of our Corollary, tying our rigorous efforts to the numerical analysis of the problem.  

The example concluding section \ref{Numerics} delivers a proof of concept for our proposed numerical framework. This example is intentionally simple as more work is needed before the numerical theory is complete. For example, further study is required regarding the projection back onto the space of parametric polynomials after each time step. We showed that if, for example, a parametric area preserving Hermite interpolation is used then the method exactly preserves derivatives and function values along with parametric area. However in the setting of parametric polynomials, simply providing this data is not enough alone to uniquely determine a parametric polynomial. Instead, additional information regarding the parametrization speed is required. This investigation is not well suited for this paper, but will appear in future work.

% addition to looking for an optimal parametric polynomial basis, a full analysis of accuracy and stability is required for these new methods to become viable. The next step would be to extend such methods to include multiple shocks and perhaps relax the conditions on the flux function $F$.

We believe the framework presented in this work provides a solid foundation for new numerical methods to be developed. Although we have only discussed 1-dimensional conservation laws, we are optimistic that similar ideas can be applied to higher dimensional problems and systems of conservation laws. Such topics and directions are the focus of our future work.

\section{Appendix}
We begin with an important Corollary of the Global Equivalence Theorem which ties the numerical approach to the rigorous theory presented above. 
 \begin{cor}\label{Cor1}
 Let $u(x,t)$ be a weak solution of (\ref{PDE}) containing a shock at $S(t)$ for $t\geq t_0$. Then, the equal area principle applied to any S-curve with zero signed area about the shock position $S(t)$ gives the same shock location as flowing the weak solution itself and applying the equal area principle on the overturned piecewise smooth curve. \end{cor}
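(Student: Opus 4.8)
The plan is to deduce Corollary \ref{Cor1} directly from Theorem \ref{GET} (the Global Equivalence Theorem) together with the uniqueness of entropy weak solutions, rather than repeating any of the area computations. First I would fix a time $t_0$ at which a shock $S(t)$ is isolated and let $u(x,t)$ be the weak solution with the two smooth branches $u_L$ and $u_R$ as in Theorem \ref{GET}. Flowing the (possibly non-smooth) piecewise smooth profile $u(\cdot,t_0)$ forward under \eqref{flow} produces an overturned curve whose equal-area projection, by the local analysis of section \ref{Justif} and the remark following Theorem \ref{GET}, tracks some shock position; call it $\tilde S(t)$. Separately, given any S-curve with zero signed area about $S(t_0)$ that agrees with $u_L$ to the left and $u_R$ to the right, Theorem \ref{GET} ($\Leftarrow$ direction plus $\Rightarrow$ direction) guarantees that its equal-area projection also moves at Rankine-Hugoniot speed starting from the same initial position $S(t_0)$.

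The key step is then an ODE uniqueness argument: both candidate shock curves satisfy the same scalar ODE $\dot S = \bigl(F(u_L(S,t))-F(u_R(S,t))\bigr)/\bigl(u_L(S,t)-u_R(S,t)\bigr)$, whose right-hand side is Lipschitz in $S$ on the relevant domain because $u_L,u_R$ are smooth and the denominator stays bounded away from zero while the shock is isolated (the states $u_{\max},u_{\min}$ do not collide). Since both curves start at $S(t_0)$ at time $t_0$, Picard–Lindelöf forces them to coincide for as long as the shock remains isolated. Hence the equal-area projection of the flowed weak solution and the equal-area projection of any admissible S-curve produce the same shock location, which is exactly the assertion.

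The one point needing care — and what I expect to be the main obstacle — is verifying that Theorem \ref{GET} genuinely applies to \emph{both} objects with a common initialization: the flowed weak solution is generally only piecewise smooth (the overturned region need not literally be an S-curve until one passes to the smooth reparametrization), so I would invoke the remark after Theorem \ref{GET} that lets one replace the non-smooth Riemann-type data by an equivalent overturned S-curve with the same zero-signed-area property, and I must check that this replacement does not move the initial shock position off $S(t_0)$. Once that matching of initial data is established, the remainder is the routine uniqueness argument above; no new computation beyond what already appears in Lemma \ref{lem1} and Lemma \ref{lem2} is required.
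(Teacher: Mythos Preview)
Your route is genuinely different from the paper's. The paper gives a static, purely geometric argument at each fixed time: it computes that the characteristic flow $\Phi$ has Jacobian determinant equal to $1$, so planar area is preserved under $\Phi$. Labelling by $A,B$ the two lobes of the S-curve about the shock at $t_0$ (equal by the zero-signed-area hypothesis) and by $T_1,T_2$ the two ``triangular'' regions cut from the flowed weak solution by the true shock line, area preservation gives $Area(A)=Area(B)$ at the later time as well; since the S-curve's two equal-area regions decompose as $A\cup T_1$ and $B\cup T_2$, one reads off $Area(T_1)=Area(T_2)$. A short second step replaces the S-curve by a thinner one when the four regions fail to be pairwise disjoint. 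No ODE, no uniqueness.

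Your dynamical argument via Picard--Lindel\"of is appealing because it recycles Theorem~\ref{GET} and avoids the disjointness case analysis, but there is a genuine circularity at exactly the spot you flag. To run the ODE comparison you must know that the equal-area line of the \emph{flowed weak solution itself} --- the piecewise $C^1$ curve obtained by advecting the vertical shock --- obeys the Rankine--Hugoniot ODE. Your proposed fix is to invoke the remark after Theorem~\ref{GET} and replace the non-smooth profile at $t_0$ by a smooth S-curve. But any such replacement is precisely one of the ``arbitrary S-curves'' in the Corollary's hypothesis: Theorem~\ref{GET} then tells you the \emph{replacement's} equal-area line follows $S(t)$, not that the \emph{original} flowed weak solution's does. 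Identifying those two is the very content of Corollary~\ref{Cor1}, so the argument closes on itself.

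The repair is easy but is not the one you wrote: argue directly that the proof of Lemma~\ref{lem1} goes through verbatim for piecewise $C^1$ S-curves (the integrations by parts and the differentiation under the integral sign survive finitely many corners), so that Lemma~\ref{lem1} applies to the flowed weak solution without any replacement. With that in hand both candidate curves genuinely satisfy the same Lipschitz ODE from the same initial point $S(t_0)$, and your uniqueness argument goes through. Alternatively you could realise the flowed weak solution as a $C^1$ limit of smooth S-curves with shrinking lobes and pass to the limit in the equal-area position, but that continuity step also needs to be supplied.
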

 
We begin our discussion by referencing Figure \ref{CorPlot}. Plotted on the left is a weak solution as described in Corollary \ref{Cor1}. It can either be viewed as the step function or as the S-curve, with their equivalence being a result of Theorem \ref{GET}. Taking all points on the left curve $(x(s,0),u(s))$ and flowing them under Burgers' equation for one second, $(x(s,0),u(s)) \longrightarrow (x(s,0)+u(s),u(s))=(x(s,1),u(s))$, gives us the second plot with the true shock position represented by the vertical dashed line. Corollary \ref{Cor1} states that the shaded regions, A and B from Figure \ref{CorPlot}, do not need to be included in the computation to find the shock location. Instead, we can simply find the vertical line which makes the area of the two triangles, $T_1$ and $T_2$, equal.  To generate a proof of this we require that the characteristic map (\ref{CharFlow1}) is conservative.

\begin{figure}[!ht]
\begin{center}
\includegraphics[width=63mm,height=60mm]{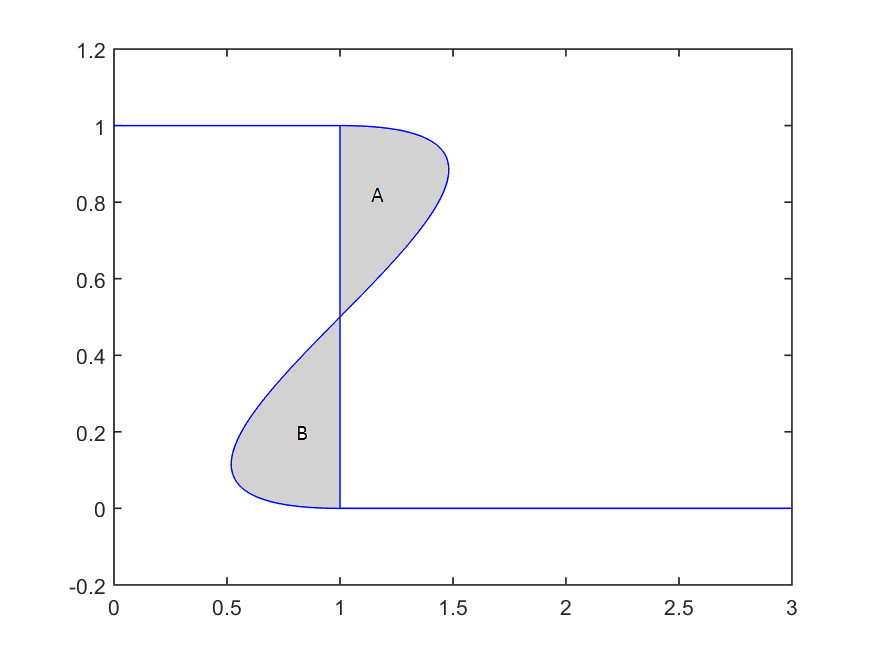}
\includegraphics[width=63mm,height=60mm]{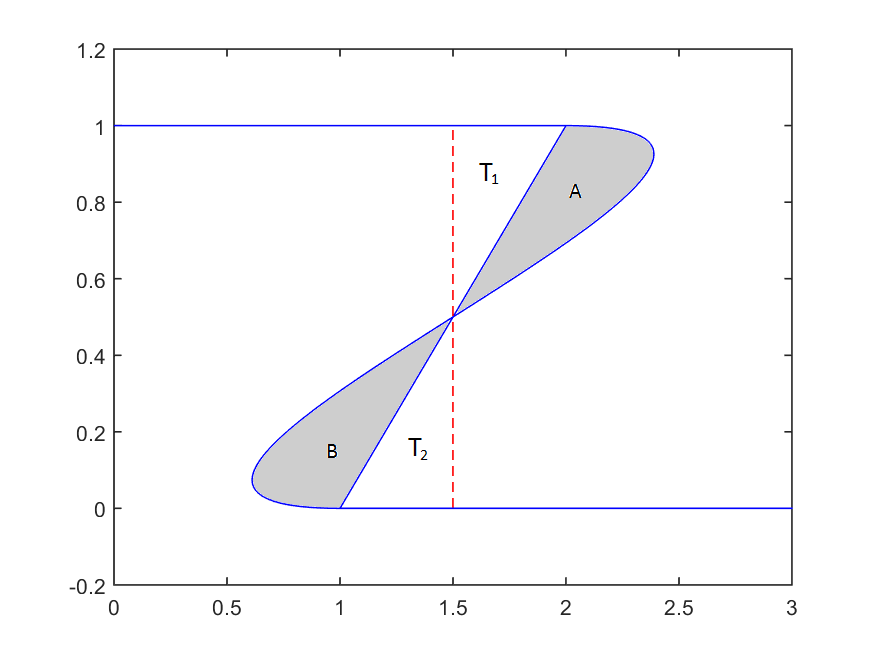}
\end{center}
\caption{Corollary}
\label{CorPlot}
\end{figure} 

Consider the flow map $\Phi (x(s,0),u(s),t)=(x(s,0) + F'(u(s))t,u(s))$. This has jacobian
\begin{equation} J(\Phi)=
\begin{vmatrix} \dfrac{\partial \Phi_1}{\partial x} & \dfrac{\partial \Phi_1}{\partial u} \\ & \\ \dfrac{\partial \Phi_2}{\partial x} & \dfrac{\partial \Phi_2}{\partial u} \end{vmatrix}=\begin{vmatrix} 1 & F''(u(s))t \\ & \\ 0 & 1 \end{vmatrix},
\end{equation}
and therefore, $\det(J(\Phi))=1$, implying that $\Phi$ is a conservative map. Therefore, any closed curves under the flow will conserve area. Applying this result to Figure \ref{CorPlot} proves that the $Area(A)$ in the left plot equals $Area(A)$ in the right plot. The same holds for B. Given that each of $T_1,\, T_2, \, A$ and $B$ are pairwise disjoint and that we started with a zero signed area S-curve, we know $Area(A)+Area(T_1)=Area(B)+Area(T_2)$. Using the additional property, $Area(A)=Area(B)$, we have that, indeed, $Area(T_1)=Area(T_2)$. It appears we have a proof of the corollary, however, it is not always true that that these sets will be disjoint for an arbitrary zero signed area S-curve. To see this we turn our attention to Figure \ref{CorPlot2}. 
\begin{figure}[!ht]
\begin{center}
\includegraphics[width=63mm,height=60mm]{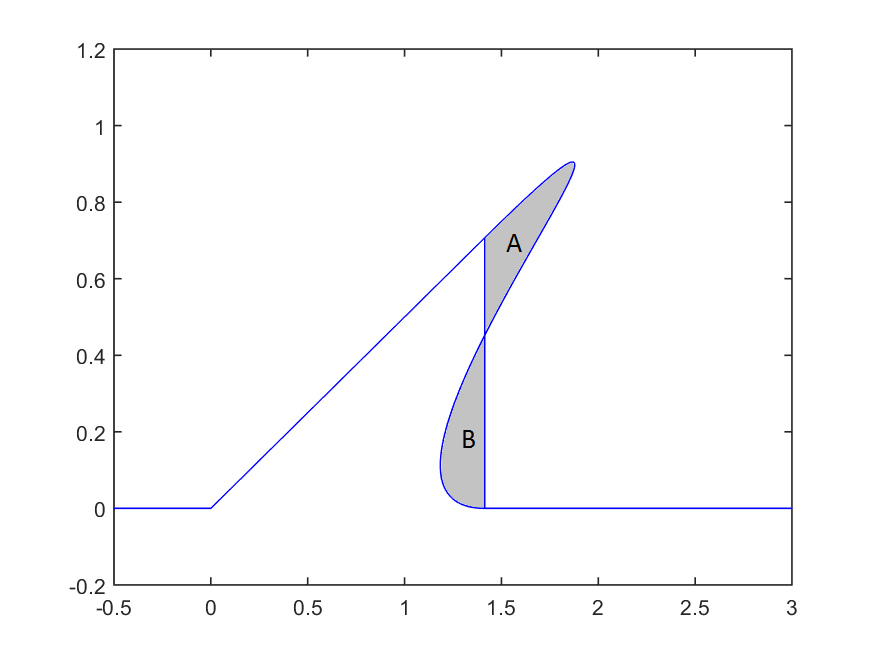}
\includegraphics[width=63mm,height=60mm]{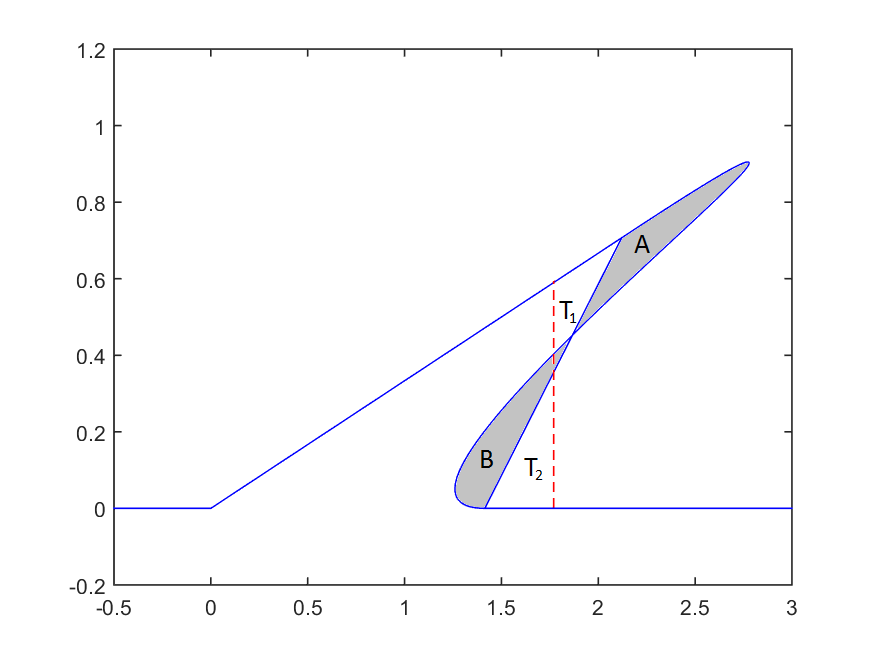}
\end{center}
\caption{An unclear application of the Corollary}
\label{CorPlot2}
\end{figure} 
Initially we are in the same situation as before, however, it is clear by the second plot that $B\cap T_1\neq\emptyset$, therefore the result that $Area(T_1)=Area(T_2)$ is not immediate. To resolve this, we simply rely on the property that the global equivalence theorem holds for any S-curve with the same signed area. We therefore replace the S-curve with a new S-curve, matching the same data, which satisfies the property that the new regions $T_1,\, T_2, \, A^*$ and $B^*$ are pairwise disjoint.  An illustration of such a curve is shown in Figure \ref{CorSaved}. %This situation is rectified by the following argument. Recall that the slanted blue line, $H(s,t)$, is the result of mapping the vertical shock line, $H(s,0)$, under the characteristic flow (\ref{CharFlow1}), where $H(0,0)$ is the bottom of the shock and $H(1,0)$ is the top of the shock in the left plot.  Clearly we can find a smooth overturned curve, intersecting $H(s,t)$ exactly three times, connecting $H(1,t)$ to $H(0,t)$ which has zero signed area about $H(s,t)$ and intersects $H(s,t)$ exactly where the current shock crosses $H(s,t)$. An illustration of such a curve is shown in Figure \ref{CorSaved}.

\begin{figure}[!ht]
\begin{center}
\includegraphics[width=80mm,height=60mm]{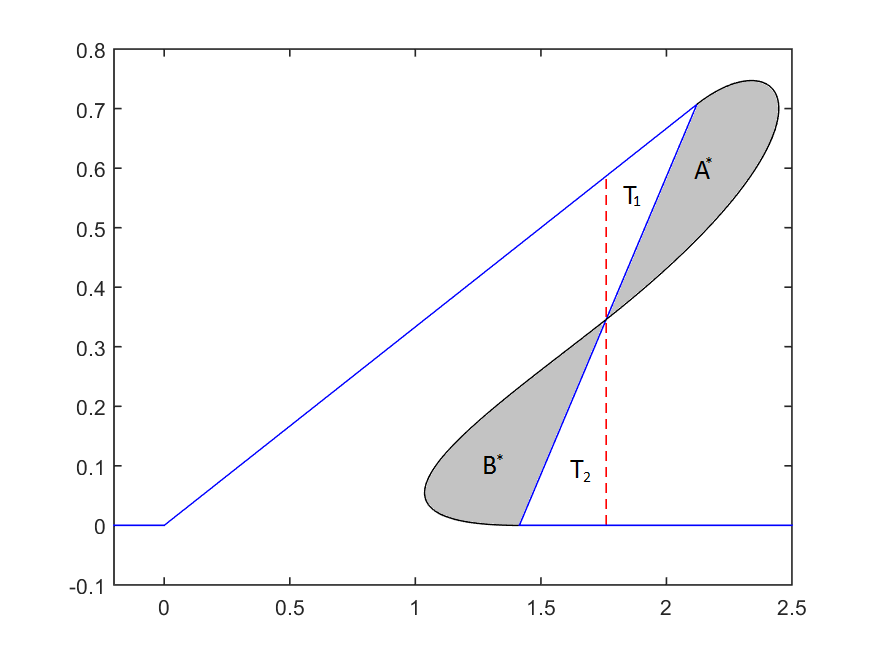}
\end{center}
\caption{The optimal curve}
\label{CorSaved}
\end{figure} 
Using the new curve shown in Figure \ref{CorSaved} does not change our solution as it has the same signed area about the slanted line, so reversing time to $t=0$ yields an equivalent S-curve about the shock. Using this new curve, with each of $T_1, \, T_2, \, A^*$ and $B^*$ disjoint, tells us $Area(T_1)=Area(T_2)$, which proves our corollary. Next we show how this result immediately enables us to obtain the shock position by finding solutions to a root finding problem.

Corollary \ref{Cor1} provides us with the ability to work directly with the weak solution, which seemingly distinguishes the numerical approach from the rigorous theory. At first glance, this may seem to discount much of our early motivation, but this is an incorrect observation. The method provided by the corollary formulates the shock motion as a polynomial root finding problem. However, this will only be accurate if the polynomial approximation of the upper and lower curves are accurate which requires accurate data for $u_L$ and $u_R$. Additionally, the application of Corollary \ref{Cor1} relies on the initial shock position being accurate. Both requirements are addressed by allowing the curve to overturn initially. This provides a setting where we can accurately find the initial shock location and produce a piecewise smooth polynomial approximation of the weak solution. We now sketch how one obtains the shock location through a root finding problem.

Suppose we have a weak solution $u(x,t)$ as shown in the left plot of Figure \ref{Simulation1}, with $u(x,t)=u_L(x,t)$ for $x<S(t)$ and $u(x,t)=u_R(x,t)$ for $x>S(t)$.  For simplicity we enforce the additional assumption that this shock remains isolated. Flowing this curve for $\Delta t$ seconds we end up with an overturned curve as shown in Figure \ref{CorPlot3}. Under the assumption that no other shocks form near $S(t)$, we have $\Phi(u_L(x,t),\Delta t)$ and $\Phi(u_R(x,t),\Delta t)$ remain functions, given by $f_{1}(x)=\Phi(u_L(x,t),\Delta t)$ and $f_{2}(x)=\Phi(u_R(x,t),\Delta t)$. Additionally we have that $\Phi(H(s,t),\Delta t)$, the shock line mapped forward $\Delta t$ seconds, will also be a function, since $F$ is taken to be uniformly convex. We therefore can interpolate $f_1(x), \, f_2(x)$ and $H(x)$ with $n^{th}$ order Hermite polynomials, given by $P(f_1(x)), \, P(f_2(x))$ and $P(H(x))$, as an example, since  we maintain access to derivative information at every point. Using that the shock location must be somewhere in the overturned region, we label $(a_1,a_2)=\Phi(H(0,t),\Delta t)$ and $(b_1,b_2)=\Phi(H(1,t),\Delta t)$. This  implies the new shock must be contained between $x=a_1$ and $x=b_1$. Assuming the shock location is given by $x=S(t+\Delta t)$, we have the region $B$ from Figure \ref{CorPlot3} has area given by
\begin{align}
Area(B)&=\int_{a_1}^{S(t+\Delta t)}{H(x)-f_2(x)\,dx}, \quad \text{and similarly,}\\
Area(A)&=\int_{S(t+\Delta t)}^{b_1}{f_1(x)-H(x)\,dx}.
\end{align} 

\begin{figure}[!ht]
\begin{center}
\includegraphics[width=120mm,height=60mm]{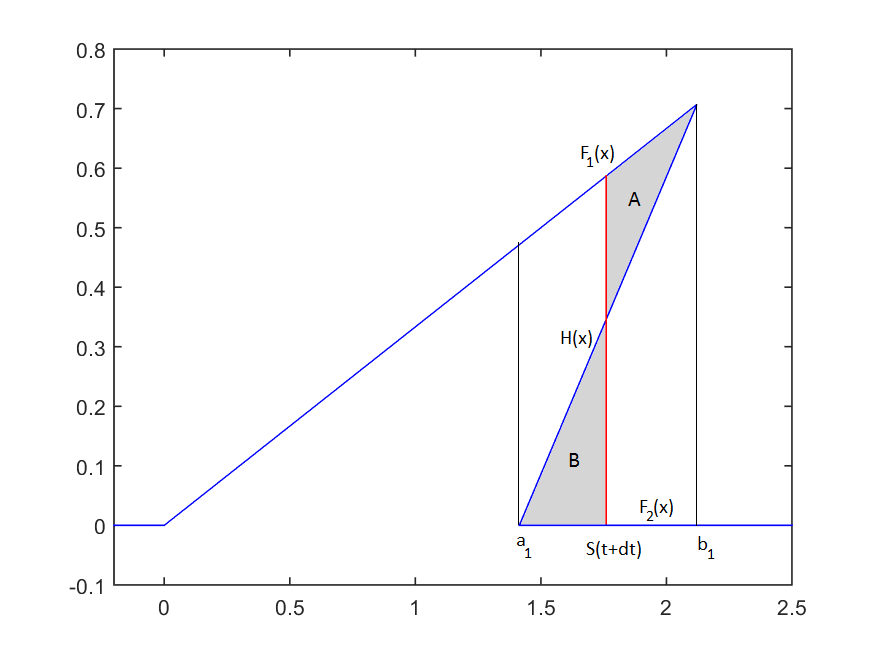}
\end{center}
\caption{Numerical approach}
\label{CorPlot3}
\end{figure}

In reality, we do not have the location of the shock $S(t+\Delta t)$ and only have access to our polynomial approximations of $f_1, \, f_2$ and $H$, but knowing that we want $Area(A)=Area(B)$, we derive a polynomial equation for the shock location $S(t+\Delta t)$, given as a root of the $(n+1)^{st}$ order polynomial
\begin{equation}
\int_{a_1}^{S(t+\Delta t)}{P(H(x))-P(f_2(x))\,dx}-\int_{S(t+\Delta t)}^{b_1}{P(f_1(x))-P(H(x))\,dx}=0, \label{PolyShock}
\end{equation}
It is clear by the formulation of the problem that there is a unique solution $S(t+\Delta t) \in (a_1,b_1)$ to equation (\ref{PolyShock}).  Therefore, the Corollary enables us to determine the shock position by finding the unique root of a polynomial in an open interval, an extremely fast operation. Once the shock location is found, we remove the overturned part of the curve and repeat this process with the new weak solution.

\section*{Acknowledgements} We are grateful to Marc Laforest, who contributed many hours of engaging and thought provoking discussion on conservation laws and related topics.
\newpage
\bibliographystyle{siam}
\bibliography{references}
\end{document}